 \newcommand{\Z}{\ensuremath{\mathbb{Z}}}
 \newcommand{\R}{\ensuremath{\mathbb{R}}}
\setlist[enumerate,1]{label=(\roman*)}
\newcommand{\norm}[1]{\left\lVert#1\right\rVert}
\newcommand{\pnorm}[1]{\left|#1\right|}
\newcommand{\norminf}[1]{\left|#1\right|_\infty}
\newcommand{\eqd}{=_d}
\newcommand{\holsp}[1][\eta]{\mathcal{C}^{#1}}
\newcommand{\Ex}[2][]{\mathbb{E}_{#1}\left[#2\right]}
\newcommand{\indicd}[1]{\mathbbm{1}_{#1}}
\newcommand{\bbS}{\ensuremath{\mathbb{S}}}
\newcommand{\tf}{T}
\newcommand{\ms}{M}
\newcommand{\holder}{H\"{o}lder}
\newcommand{\map}[2]{\colon#1\rightarrow#2}
\newcommand{\Lip}{\mathrm{Lip}}
\newtheorem{prop}{Proposition}[section]
\newtheorem{lemma}[prop]{Lemma}
\newtheorem{theorem}[prop]{Theorem}
\newtheorem{remark}[prop]{Remark}
\newtheorem{defn}[prop]{Definition}
\let\save@mathaccent\mathaccent
\newcommand*\if@single[3]{%
	\setbox0\hbox{${\mathaccent"0362{#1}}^H$}%
	\setbox2\hbox{${\mathaccent"0362{\kern0pt#1}}^H$}%
	\ifdim\ht0=\ht2 #3\else #2\fi
}
\newcommand*\rel@kern[1]{\kern#1\dimexpr\macc@kerna}
\newcommand*\widebar[1]{\@ifnextchar^{{\wide@bar{#1}{0}}}{\wide@bar{#1}{1}}}
\newcommand*\wide@bar[2]{\if@single{#1}{\wide@bar@{#1}{#2}{1}}{\wide@bar@{#1}{#2}{2}}}
\newcommand*\wide@bar@[3]{%
	\begingroup
	\def\mathaccent##1##2{%
		\let\mathaccent\save@mathaccent
		\if#32 \let\macc@nucleus\first@char \fi
		\setbox\z@\hbox{$\macc@style{\macc@nucleus}_{}$}%
		\setbox\tw@\hbox{$\macc@style{\macc@nucleus}{}_{}$}%
		\dimen@\wd\tw@
		\advance\dimen@-\wd\z@
		\divide\dimen@ 3
		\@tempdima\wd\tw@
		\advance\@tempdima-\scriptspace
		\divide\@tempdima 10
		\advance\dimen@-\@tempdima
		\ifdim\dimen@>\z@ \dimen@0pt\fi
		\rel@kern{0.6}\kern-\dimen@
		\if#31
		\overline{\rel@kern{-0.6}\kern\dimen@\macc@nucleus\rel@kern{0.4}\kern\dimen@}%
		\advance\dimen@0.4\dimexpr\macc@kerna
		\let\final@kern#2%
		\ifdim\dimen@<\z@ \let\final@kern1\fi
		\if\final@kern1 \kern-\dimen@\fi
		\else
		\overline{\rel@kern{-0.6}\kern\dimen@#1}%
		\fi
	}%
	\macc@depth\@ne
	\let\math@bgroup\@empty \let\math@egroup\macc@set@skewchar
	\mathsurround\z@ \frozen@everymath{\mathgroup\macc@group\relax}%
	\macc@set@skewchar\relax
	\let\mathaccentV\macc@nested@a
	\if#31
	\macc@nested@a\relax111{#1}%
	\else
	\def\gobble@till@marker##1\endmarker{}%
	\futurelet\first@char\gobble@till@marker#1\endmarker
	\ifcat\noexpand\first@char A\else
	\def\first@char{}%
	\fi
	\macc@nested@a\relax111{\first@char}%
	\fi
	\endgroup
}
\title{Functional Correlation Bounds and Optimal Iterated Moment Bounds for Slowly-mixing Nonuniformly Hyperbolic Maps}
\author{Nicholas Fleming-Vázquez\thanks{Mathematics Institute, University of Warwick, Coventry, CV4 7AL, UK}}
\newcommand{\shat}[1]{\vphantom{#1}\smash[t]{\widehat{#1}}}
\newcommand{\stilde}[1]{\vphantom{#1}\smash[t]{\widetilde{#1}}}
\newcommand{\fcb}{Functional Correlation Bound}
\newcommand{\dholsp}{\mathcal{H}}
\newcommand{\sepdholsp}[1]{\mathcal{SH}_{#1}}
\newcommand{\dsemi}[1]{[#1]_{\mathcal{H}}}
\newcommand{\sdsemi}[2]{[#1]_{\mathcal{H},#2}}
\newcommand{\dhnorm}[1]{\norm{#1}_{\mathcal{H}}}
\numberwithin{equation}{section}
\begin{document}
	\maketitle
	\begin{abstract}
		Consider a nonuniformly hyperbolic map $ \tf\map{\ms}{\ms} $ modelled by a Young tower with tails of the form $ O(n^{-\beta}) $, $ \beta>2 $. We prove optimal moment bounds for Birkhoff sums $ \sum_{i=0}^{n-1}v\circ\tf^i $ and iterated sums $ \sum_{0\le i<j<n}v\circ\tf^i\, w\circ\tf^j $, where $ v,w\map{\ms}{\R} $ are (dynamically)~\holder{} observables. Previously iterated moment bounds were only known for $ \beta>5$. Our method of proof is as follows; (i) prove that $ \tf $ satisfies an abstract functional correlation bound, (ii) use a weak dependence argument to show that the functional correlation bound implies moment estimates.

		Such iterated moment bounds arise when using rough path theory to prove deterministic homogenisation results. Indeed, by a recent result of Chevyrev, Friz, Korepanov, Melbourne \& Zhang we have convergence to an It\^{o} diffusion for fast-slow systems of the form
		\[ x^{(n)}_{k+1}=x_k^{(n)}+n^{-1}a(x_k^{(n)},y_k)+n^{-1/2}b(x_k^{(n)},y_k) , \quad y_{k+1}=\tf y_k \]
		in the optimal range $ \beta>2. $
	\end{abstract}
	\section{Introduction}
		Let $ \tf\map{\ms}{\ms} $ be an ergodic, measure-preserving transformation defined on a bounded metric space $ (\ms,d) $ with Borel probability measure $ \mu $.
	Consider a fast-slow system on $\R^d\times \ms$ of the form 
	\begin{equation}\label{eq:basic_fast_slow}
		x_{k+1}^{(n)}=x_k^{(n)}+n^{-1}a(x_k^{(n)},y_k)+n^{-1/2}b(x_k^{(n)},y_k),\quad y_{k+1}=Ty_k
	\end{equation}
	where the initial condition $x^{(n)}_0\equiv \xi$ is fixed and $ y_0 $ is picked randomly from $ (\ms,\mu). $ When the fast dynamics $ \tf\map{\ms}{\ms} $ is chaotic enough, it is expected that the stochastic process $ X_n $ defined by $ X_n(t)=x^{(n)}_{[nt]} $ will weakly converge to the solution of a stochastic differential equation driven by Brownian motion. This is referred to as \textit{deterministic homogenisation} and has been of great interest recently \cite{dolgopyat2004limit,mstuart2011,gottwald2013homogenization,kelly2016smooth,de2016statistical,de2018limit,chevyrev2020deterministic,korepanov2020deterministic}. See \cite{chevyrevsurvey2019} for a survey of the topic.

	In \cite{kelly2016smooth}, Kelly and Melbourne considered the special case where $a(x,y)\equiv a(x)$ and $ b(x,y)=h(x)v(y) $. By using rough path theory, they showed that deterministic homogenisation reduces to proving two statistical properties for $ \tf\map{\ms}{\ms} $. In \cite{chevyrev2020deterministic} this result was extended to general $a,b$ satisfying mild regularity assumptions. 
	
	One of the assumed statistical properties is an ``iterated weak invariance principle". In \cite{kelly2016smooth,melbourne2016note} it was shown that this property is satisfied by nonuniformly expanding/hyperbolic maps modelled by Young towers, provided that the tails of the return time decay at rate $ O(n^{-\beta}) $ for some $ \beta>2 $ (which is the optimal range for such results).
	
	The second assumed statistical property is control of ``iterated moments", which gives tightness in the rough path topology used for proving convergence. This condition has proved much more problematic. Advances in rough path theory \cite{chevyrevsurvey2019,chevyrev2020deterministic} significantly weakened the moment requirements from \cite{kelly2016smooth} and these weakened moment requirements were eventually proved for nonuniformly \emph{expanding} maps in the optimal range (i.e.\ $ \beta>2 $) in \cite{korepanov2020deterministic}.
	
	However, for nonuniformly hyperbolic maps modelled by Young towers previously it was only possible to show iterated moment bounds for $ \beta>5 $ \cite{demers2020martingale}. In this article, we extend iterated moment bounds to the optimal range $ \beta>2. $
	
	\subsection{Illustrative examples}
	Many examples of invertible dynamical systems are modelled by Young towers \cite{young1998statistical,young1999recurrence}. For example, Axiom A (uniformly hyperbolic) diffeomorphisms, Henon attractors and the finite-horizon Sinai billiard are modelled by Young towers with exponential tails, so for such systems deterministic homogenisation results follow from \cite{kelly2016smooth,kelly2017dethom}. We now give some examples of slowly-mixing nonuniformly hyperbolic dynamical systems for which it was not previously possible to show deterministic homogenisation, due to a lack of control of iterated moments. We start with an example which is easy to write down:
	\begin{itemize}[leftmargin=*]
	\item \textbf{Intermittent Baker's maps.}\quad Let $ \alpha\in(0,1). $ Define $ g\map{[0,1/2]}{[0,1]} $ by $ g(x)=x(1+2^{\alpha}x^\alpha) $. 
	The Liverani-Saussol-Vaienti map $ \bar{\tf}\map{[0,1]}{[0,1]} $, 
	\[ \bar\tf x=\begin{cases}$~$
		g(x),& x\le 1/2,\\
		2x-1,& x>1/2
	\end{cases} \] 
	is a prototypical example of a slowly-mixing nonuniformly expanding map \cite{lsvmaps}. As in~\cite[Exa.~4.1]{melbourne2016note}, consider an intermittent Baker's map $ \tf\map{\ms}{\ms} $, $ \ms=[0,1]^2 $ defined by 
	\begin{equation*}
	\tf(x_1,x_2)=\begin{cases}
		(\bar{T}x_1,g^{-1}(x_2)),&x_1\in[0,\frac{1}{2}],\ x_2\in [0,1],\\
		(\bar{T}x_1,(x_2+1)/2),& x_1\in(\frac{1}{2},1],\ x_2\in [0,1].\\
	\end{cases}
\end{equation*}
	There is a unique absolutely continuous invariant probability measure $ \mu $. The map $ \tf $ is nonuniformly hyperbolic and has a neutral fixed point at $ (0,0) $ whose influence increases with $ \alpha $. In particular, $ \tf $ is modelled by a two-sided Young tower with tails of the form $ \sim n^{-\beta} $ where $ \beta=1/\alpha $.
	
	For $ \beta>2 $ the central limit theorem (CLT) holds for all \holder{} observables. For $ \beta\le 2 $ the CLT fails for typical \holder{} observables \cite{gouezel2004central}, so it is natural to restrict to $ \beta>2 $ when considering deterministic homogenisation. By \cite{demers2020martingale} it is possible to show iterated moment bounds for $ \beta>5 $. Our results yield iterated moment bounds and hence deterministic homogenisation in the full range $ \beta>2 .$
\end{itemize}
	Dispersing billiards provide many examples of slowly-mixing nonuniformly hyperbolic maps. Markarian \cite{markarian2004etds}, Chernov and Zhang \cite{chernovzhang2005nonlin} showed how to model many examples of dispersing billiards by Young towers with polynomial tails.
	
	We give two classes of dispersing billiards for which it is now possible to show deterministic homogenisation:
\begin{itemize}[leftmargin=*,label=\textbullet]
	\item \textbf{Bunimovich flowers \cite{bunimovich_flowers}.}\quad By \cite{chernovzhang2005nonlin} the billiard map is modelled by a Young tower with tails of the form $ O(n^{-3}(\log n)^{3}). $
	\item \textbf{Dispersing billiards with vanishing curvature.}\quad In~\cite{chernov2005family} Chernov and Zhang introduced a class of billiards modelled by Young towers with tails of the form $ O((\log n)^{\beta}n^{-\beta}) $ to any prescribed value of $ \beta\in (2,\infty) $.
	\end{itemize}
\par\textbf{Notation}\quad We endow $ \R^k $ with the norm $ |y|=\sum_{i=1}^k |y_i|$. 

Let $ \eta\in(0,1] $. We say that an observable $ v\map{\ms}{\R} $ on a metric space $ (\ms,d) $ is \mbox{$ \eta $-\holder{}}, and write $ v\in \holsp(\ms) $, if $ \norm{v}_\eta=\norminf{v}+[v]_\eta<\infty, $ where $ \norminf{v}=\sup_\ms |v|$ and  $[v]_\eta=\sup_{x\ne y}|v(x)-v(y)|/d(x,y)^\eta. $ 
If $ \eta=1 $ we call $ v $ Lipschitz and write $ \Lip(v)=[v]_{1}. $ For $ 1\le p\le \infty $ we use $|\cdot|_p$ to denote the $ L^p $ norm.

The rest of this article is structured as follows. In Section~\ref{section:main_results} we state our main results. Our first main result, Theorem~\ref{thm:fcb_2_sided_yt}, is that mixing nonuniformly hyperbolic maps modelled by Young towers with polynomial tails satisfy a functional correlation bound. Our second main result, Theorem~\ref{thm:moment_bd}, is that this functional correlation bound implies control of iterated moments. 

In Section~\ref{section:yt} we recall background material on Young towers and prove Theorem~\ref{thm:fcb_2_sided_yt}. In Section~\ref{section:abstract_weak_dep} we prove that our functional correlation bound implies an elementary weak dependence condition. Finally in Section~\ref{section:moment_bounds} we use this condition to prove Theorem~\ref{thm:moment_bd}.
	\section{Main results}\label{section:main_results}
	Let $ \tf\map{\ms}{\ms} $ be a nonuniformly hyperbolic map modelled by a Young tower. We state our results for the class of dynamically \holder{} observables, noting that this includes \holder{} observables.
	We delay the definitions of Young tower and dynamically \holder{} until Section~\ref{section:yt_prereq}. Let $ \dholsp(\ms) $ denote the class of dynamically \holder{} observables on $ \ms $ and let $ \dsemi{\cdot} $ denote the dynamically \holder{} seminorm.
	\begin{defn}
		Fix an integer $ q\ge 1. $ Given a function $ G\map{\ms^{q}}{\R} $ and $ 0\le i< q$ we denote
		\[\sdsemi{G}{i} =\sup_{x_0,\dots,x_{q-1}\in \ms}\dsemi{G(x_0,\dots,x_{i-1},\cdot,x_{i+1},\dots,x_{q-1})}. \]
		We call $ G $ separately dynamically \holder{}, and write $ G\in \sepdholsp{q}(\ms)$, if $ \norminf{G}+\sum_{i=0}^{q-1} \sdsemi{G}{i}<\infty. $
	\end{defn}
	Fix $\gamma>0$. We consider dynamical systems which satisfy the following property:
\begin{defn}
	Suppose that there exists a constant $C>0$ such that for all integers $ 0\le p< q,\ 0\le n_0\le \cdots\le n_{q-1} $,
	\begin{align}
		&\bigg|\int_\ms  G(\tf^{n_0}x,\dots,\tf^{n_{q-1}}x)d\mu(x)-\int_{\ms^2} G(\tf^{n_0}x_0,\dots,\tf^{n_{p-1}}x_0,\tf^{n_p}x_1,\dots,\tf^{n_{q-1}} x_1)d\mu(x_0)d\mu(x_1)\bigg|	\nonumber\\
		&\qquad\le C(n_p-n_{p-1})^{-\gamma}\biggl(\norminf{G}+\sum_{i=0}^{q-1} \sdsemi{G}{i}\biggr)	\label{eq:fcb_bd}	
	\end{align}
	for all $G\in \sepdholsp{q}(\ms)$. Then we say that $\tf$ satisfies the Functional Correlation Bound with rate $n^{-\gamma}$.
\end{defn}
A similar condition was introduced by Lepp\"{a}nen in \cite{leppanen2017functional} and further studied by Lepp\"{a}nen and Stenlund in \cite{leppanen2017billiards,leppanen2020sunklodas}. In particular, \cite{leppanen2017functional}  showed that functional correlation decay implies a multi-dimensional CLT with bounds on the rate of decay.
We are now ready to state the main results which we prove in this paper. 

The rate of decay of correlations of a dynamical system modelled by a Young tower is determined by the tails of the
return time to the base of the tower. Indeed, let $ \tf $ be a mixing transformation modelled by a two-sided Young tower with tails of the form $O(n^{-\beta})$
for some $\beta>1$. In~\cite{melbourne_terhesiu_2014} by using ideas from \cite{chazottes2012optimal,gouezelprivate}, it was shown that there exists $ C>0 $ such that
\[ \bigg|\int_M v\ w\circ \tf^n d\mu-\int_\ms v d\mu\int_\ms wd\mu\bigg|\le Cn^{-(\beta-1)}\dhnorm{v}\dhnorm{w} \]
for all $n\ge 1, v,w \in \dholsp(\ms)$.
Our first main result is that the \fcb{} holds with the same rate:
\begin{theorem}\label{thm:fcb_2_sided_yt}
	Let $\beta>1$. Let $\tf$ be a mixing transformation modelled by a two-sided Young tower whose return time has tails of the form $O(n^{-\beta})$. Then $\tf$ satisfies the \fcb{} with rate $n^{-(\beta-1)}$.
\end{theorem}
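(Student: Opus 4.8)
The plan is to lift both integrals in \eqref{eq:fcb_bd} to the Young tower, collapse the stable direction to pass to the one-sided quotient tower, and there bound the difference with the transfer operator, feeding in the optimal renewal estimate behind the correlation bound of \cite{melbourne_terhesiu_2014}. Recall the hyperbolic product set $\Lambda\subseteq\ms$ with first-return time $R$, $\mu(R>n)=O(n^{-\beta})$; the tower $\Delta$ with map $F$ and semiconjugacy $\pi\map{\Delta}{\ms}$, $\pi_*\mu_\Delta=\mu$; and the one-sided quotient tower $\bar\Delta$ obtained by collapsing stable leaves, with expanding map $\bar F$, semiconjugacy $\bar\pi\map{\Delta}{\bar\Delta}$, invariant measure $\bar\mu=\bar\pi_*\mu_\Delta$, and transfer operator $\bar L$ (so $\int(\psi\circ\bar F)\phi\,d\bar\mu=\int\psi\,\bar L\phi\,d\bar\mu$). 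Since $\pi F=\tf\pi$, the left side of \eqref{eq:fcb_bd} equals $\int_\Delta G(\pi F^{n_0}y,\dots,\pi F^{n_{q-1}}y)\,d\mu_\Delta(y)$ and the right side its decoupled analogue; put $g=n_p-n_{p-1}$, and assume $p\ge1$, $g\ge1$, the remaining cases being trivial (the bound is then vacuous or the difference vanishes).

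In the lifted integral I would first detach the ``future'' slots $i\ge p$ from the stable coordinate: replace each $\pi F^{n_i}y$, $i\ge p$, by $\pi F^{n_i}\rho(\bar\pi y)$ for a fixed section $\rho\map{\bar\Delta}{\Delta}$ of $\bar\pi$; since $y$ and $\rho(\bar\pi y)$ lie on a common stable leaf and $F^{n_i}$ contracts stable manifolds by $\lesssim\theta^{n_i}\le\theta^g$, this costs only $O\bigl(\theta^g\sum_{i\ge p}\sdsemi{G}{i}\bigr)$ and makes the future part a function of $\bar\pi y$ alone. Partitioning $\bar\Delta$ at scale $\delta$ so as to resolve $\bar F^{n_i}\bar\pi y$ for every $i\ge p$ then rewrites the integrand, up to $O\bigl(\delta\sum_{i\ge p}\sdsemi{G}{i}\bigr)$, as $\sum_w\bigl(\indicd{\bar Q_w}\circ\bar\pi\circ F^{n_p}\bigr)(y)\,\phi_w(y)$, where $\{\bar Q_w\}$ is a partition of $\bar\Delta$ into cylinders (so $\sum_w\bar\mu(\bar Q_w)=1$) and $\phi_w(y)=G(\pi F^{n_0}y,\dots,\pi F^{n_{p-1}}y,c_w)$ has $\norminf{\phi_w}\le\norminf{G}$. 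Integrating out the stable direction gives $\int_\Delta\phi_w\,(\indicd{\bar Q_w}\circ\bar\pi\circ F^{n_p})\,d\mu_\Delta=\int_{\bar\Delta}(\bar L^{n_p}\bar\phi_w)\,\indicd{\bar Q_w}\,d\bar\mu$ with $\bar\phi_w=\E[\phi_w\mid\bar\pi^{-1}\mathcal B_{\bar\Delta}]$ (the stable holonomy being Hölder, $\bar\phi_w$ inherits the needed regularity). Writing $\bar L^{n_p}=\bar L^g\bar L^{n_{p-1}}$ and using the standard regularisation estimate for the Gibbs--Markov tower, $\dhnorm{\bar L^{n_{p-1}}\bar\phi_w}\lesssim\norminf{\bar\phi_w}+\theta^{n_{p-1}}\sum_{i<p}\sdsemi{G}{i}\theta^{-n_i}\lesssim\norminf{G}+\sum_{i<p}\sdsemi{G}{i}$, the $\theta^{-n_i}$ from expansion in the past slots being absorbed by the $\theta^{n_{p-1}}$ gained on pushing forward. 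Feeding this into the optimal renewal bound $\norminf{\bar L^g\psi-\bar\mu(\psi)}\lesssim g^{-(\beta-1)}\dhnorm{\psi}$ underlying \cite{melbourne_terhesiu_2014} gives $\int_{\bar\Delta}(\bar L^{n_p}\bar\phi_w)\indicd{\bar Q_w}\,d\bar\mu=\mu_\Delta(\phi_w)\bar\mu(\bar Q_w)+O\bigl(g^{-(\beta-1)}(\norminf{G}+\sum_{i<p}\sdsemi{G}{i})\bar\mu(\bar Q_w)\bigr)$. The same section and partition applied to the decoupled integral produce $\sum_w\mu_\Delta(\phi_w)\bar\mu(\bar Q_w)$ as its leading term, so summing over $w$ (using $\sum_w\bar\mu(\bar Q_w)=1$) and taking $\delta=g^{-(\beta-1)}$ yields the claimed bound $Cg^{-(\beta-1)}\bigl(\norminf{G}+\sum_i\sdsemi{G}{i}\bigr)$.

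The main difficulty is obtaining the \emph{sharp} exponent $\beta-1$ — with no logarithmic or $\epsilon$ loss, uniformly in $q$, in the ordered times $n_0\le\cdots\le n_{q-1}$, and over all $G\in\sepdholsp{q}(\ms)$, and ending with the norm $\norminf{G}+\sum_i\sdsemi{G}{i}$ and no $q$- or $\theta^{-n_i}$-dependent constants. This requires: (a) restating the estimate of \cite{melbourne_terhesiu_2014} as a bound on $\norminf{\bar L^g\psi-\bar\mu(\psi)}$ for $\psi$ in the good function space, rather than as a scalar correlation bound, so that it may be tested against the indicators $\indicd{\bar Q_w}$ — which have unit $L^1$-mass but large $\mathcal H$-norm — and the sum over $w$ telescopes; (b) pushing the regularisation estimate through the tower's return-time structure, checking that exactly $\norminf{G}+\sum_{i<p}\sdsemi{G}{i}$ survives; and (c) making the stable-leaf collapse rigorous, verifying that the future-slot replacements, the disintegration $\bar\phi_w=\E[\,\cdot\mid\bar\pi^{-1}\mathcal B_{\bar\Delta}]$ and the Hölder holonomies cooperate so that the reduced data retains separately-dynamically-\holder{} bounds with $q$-independent constants.
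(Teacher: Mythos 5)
Your high-level strategy (lift to the tower, collapse the stable direction, pass to the one-sided quotient, conclude with a transfer-operator estimate at the single gap $n_p-n_{p-1}$) is the same as the paper's, and the intended rate-balancing is correct. However, two central estimates are asserted in a form that is false in this framework, and fixing them is precisely where the paper's technical work lies.

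First, the claim that replacing $y$ by $\rho(\bar\pi y)$ costs $O\bigl(\theta^{g}\sum_{i\ge p}\sdsemi{G}{i}\bigr)$ because ``$F^{n_i}$ contracts stable manifolds by $\lesssim\theta^{n_i}$'' is not available here. The paper's framework does not assume uniform contraction along stable leaves under the tower map (this is exactly what lets it cover billiards). The contraction estimate \eqref{eq:two_sided_GM_contraction} is stated for the \emph{return} map $F$; under the tower map $f$, $n$ steps correspond to $\psi_n$ returns and the stable contraction is $\theta^{\psi_n}$, where $\psi_n$ can be as small as $0$. So the replacement error is $O\bigl(\theta^{\psi_R}\bigr)$ pointwise, not $O(\theta^R)$, and one must then integrate and invoke the moment bound $\int\theta^{\psi_R}\,d\mu_\Delta=O(R^{-(\beta-1)})$ (Lemma~\ref{lemma:neg_exp_moments}). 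The paper's $\widetilde H_R$ construction and Proposition~\ref{prop:H_R_approx}\ref{item:H_R-H_diff} are built precisely to track this $\theta^{\psi_R}$ dependence, and the $\psi_R$-adapted partition is not an optional refinement — it is what makes the estimate true.

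Second, the ``optimal renewal bound'' is invoked as $\norminf{\bar L^g\psi-\bar\mu(\psi)}\lesssim g^{-(\beta-1)}\dhnorm{\psi}$ uniformly over the whole quotient tower, and you then test it against the indicators $\indicd{\bar Q_w}$ and sum. That uniform $L^\infty$ bound is false: for $x=(y,\ell)$ with $\ell\ge1$ the only $\bar f$-preimage of $x$ is $(y,\ell-1)$, so $(\bar L^{g}\psi)(y,\ell)=(\bar L^{g-\ell}\psi)(y,0)$, and for $\ell$ comparable to $g$ no decay has taken place. Operator renewal theory yields a pointwise bound only on the base $\bar\Delta_0$ (Lemma~\ref{lemma:transfer_op_ptwise_bd}); to get a bound over the whole tower one must decompose by level $\ell$, use the renewal bound with gap $g-\ell$ for $\ell\le g/2$, and use the tail $\bar\mu_\Delta(\bar\Delta_\ell)=O((\ell+1)^{-\beta})$ together with boundedness for $\ell>g/2$. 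This is exactly Lemma~\ref{lemma:decay_correlations_generalisation}, which plays the role your single $L^\infty$ bound was meant to play. Relatedly, your proof does not address the case where $\gcd\{\phi\}>1$ (the tower map need not be mixing even when $\mu$ is), which requires a separate reduction to $\tf^d$; this is routine but should be stated.
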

Given $v,w\in \dholsp(\ms)$ mean zero define
\begin{equation*}
	S_v(n)=\sum_{0\le i<n}v\circ \tf^i,\quad \bbS_{v,w}(n)=\sum_{0\le i<j<n}v\circ\tf^i\ w\circ \tf^j.
\end{equation*}
Our second main result is that the Functional Correlation Bound implies moment estimates for $ S_v(n) $ and $ \bbS_{v,w}(n) $. Let $ \dhnorm{\cdot}=\norminf{\cdot}+\dsemi{\cdot} $ denote the dynamically \holder{} norm.
\begin{theorem}\label{thm:moment_bd}
	Let $\gamma>1$. Suppose that $\tf$ satisfies the \fcb{} with rate $n^{-\gamma}$. Then there exists a constant $ C>0 $ such that for all $n\ge 1$, for any mean zero $ v,w\in \dholsp(\ms) $,
	\begin{enumerate}[label=(\alph*)]
		\item\label{item:moment_bd} $\pnorm{S_v(n)}_{2\gamma}\le Cn^{1/2}\dhnorm{v}$.
		\item\label{item:iterated_moment_bd} $\pnorm{\bbS_{v,w}(n)}_\gamma\le Cn\dhnorm{v}\dhnorm{w}$.
	\end{enumerate}
\end{theorem}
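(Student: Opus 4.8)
The plan is to derive both bounds from the \fcb{} alone, in two stages. The first stage (Section~\ref{section:abstract_weak_dep}) converts the integral inequality \eqref{eq:fcb_bd} into a flexible \emph{weak dependence} statement: there is $C>0$ such that for mean zero $v_1,\dots,v_r\in\dholsp(\ms)$ with $\dhnorm{v_i}\le 1$, for all $0\le m_1\le\dots\le m_r$ and all $1\le p<r$,
\[ \Bigl|\Ex{\textstyle\prod_{i=1}^{r}v_i\circ\tf^{m_i}}-\Ex{\textstyle\prod_{i\le p}v_i\circ\tf^{m_i}}\Ex{\textstyle\prod_{i>p}v_i\circ\tf^{m_i}}\Bigr|\le C^{r}(m_{p+1}-m_p)^{-\gamma}, \]
and in particular $\pnorm{\Ex{\Phi\cdot v\circ\tf^{n}}}\le C(n-m)^{-\gamma}\norminf{\Phi}\dhnorm{v}$ whenever $v$ is mean zero and $\Phi$ is any product of terms $w_j\circ\tf^{m_j}$ with $m_j\le m$. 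The one substantive point is that a product $\prod_i v_i\circ\tf^{m_i}$, regarded as a function of its distinct time-coordinates, lies in the separately dynamically \holder{} class with all its partial seminorms bounded by $C^{r}\prod_i\dhnorm{v_i}$; this is where working with \emph{dynamically} \holder{} observables is essential, since $w\circ\tf^k$ has dynamically \holder{} norm bounded uniformly in $k$ whereas its ordinary \holder{} norm blows up.

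For \ref{item:moment_bd} I reduce to $\Ex{\pnorm{S_v(n)}^{2\gamma}}\le Cn^{\gamma}$ for mean zero $v$ with $\dhnorm{v}\le 1$. The two-point case of the weak dependence bound gives $\pnorm{\Cov{v\circ\tf^i}{v\circ\tf^j}}\le C\pnorm{i-j}^{-\gamma}$, and since $\gamma>1$ this already yields $\Ex{S_v(n)^{2}}\le Cn$. To reach the \emph{critical} exponent $2\gamma$ I run a blocking/recursion argument on $[0,n)$, in the spirit of the nonuniformly expanding case \cite{korepanov2020deterministic}: partition $[0,n)$ into $N$ blocks of length $\sim n/N$ separated by short gaps of length $\delta$, so that $S_v(n)$ equals the sum of the mean zero block sums $Y_1,\dots,Y_N$ plus a gap contribution of size $O(N\delta)$; the weak dependence bound makes the $Y_i$ approximately independent (decorrelation at rate $\delta^{-\gamma}$), so a Rosenthal-type moment inequality — which holds for the real exponent $2\gamma$ because one never expands the power, passing instead through a square-function estimate — gives, modulo errors, $\Ex{\pnorm{S_v(n)}^{2\gamma}}\le C_\gamma\bigl(\sum_i\pnorm{Y_i}_{2\gamma}^{2\gamma}+(\textstyle\sum_i\Ex{Y_i^{2}})^{\gamma}\bigr)$. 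The variance term yields the required $n^{\gamma}$ via the $L^2$ bound just proved; the self-similar term carries the coefficient $C_\gamma N^{1-\gamma}$, which is $<1$ once $N$ is a large constant depending only on $\gamma$, so the recursion contracts; and $\delta$ is taken to be a slowly growing power of $n$ so that the gap loss and all decorrelation errors are $o(n^{\gamma})$. A large enough final constant then closes the induction on $n$. Arguing at the exponent $2\gamma$ directly is unavoidable: since $\Ex{\pnorm{S_v(n)}^{p}}$ genuinely exceeds $n^{p/2}$ for $p>2\gamma$, one cannot reach the sharp power $n^{1/2}$ either by passing through a larger even integer moment or by interpolating $L^{2}$ against a crude bound.

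For \ref{item:iterated_moment_bd} I run the analogous scheme on $\bbS_{v,w}$, using the block identity
\[ \bbS_{v,w}(n)=\sum_{i=1}^{N}\bbS_{v,w}(B_i)+\sum_{1\le i<i'\le N}S_v(B_i)\,S_w(B_{i'})\quad(\text{mod short-gap terms}). \]
The first sum is treated exactly as in \ref{item:moment_bd}, now targeting $n$ rather than $n^{1/2}$; the bilinear term is estimated by combining part~\ref{item:moment_bd} — each $\pnorm{S_v(B_i)}_{2\gamma},\pnorm{S_w(B_{i'})}_{2\gamma}\le C(n/N)^{1/2}$ — with a moment bound for bilinear forms in approximately independent centred variables, whose natural size is $O(n)$. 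I expect this iterated estimate to be the main obstacle. The decorrelation rate being only polynomial, the separating gaps must be a genuine power of $n$; but deleting a length-$\delta$ gap from $[0,n)$ perturbs about $\delta n$ of the $\binom{n}{2}$ index pairs, so the crude loss $\sim\delta n$ is fatal, and one must instead write each omitted contribution as $\sum_{s\in\text{gap}}v\circ\tf^s\cdot S_w(\,\cdot\,)$ (together with its mirror sum), invoke part~\ref{item:moment_bd} again, and exploit the cancellation among the $v\circ\tf^{s}$, bringing the loss down to roughly $O(Nn^{1/2}\delta^{1/2})$. One must then choose $N$ and $\delta$ so that the bilinear term, the decorrelation errors, and this reduced cutting loss are simultaneously $O(n)$, while also controlling the dynamically \holder{} norms of the partial iterated sums $\bbS_{v,w}(B_i)$ feeding the weak dependence bound; it is in making these choices compatible that the hypothesis $\gamma>1$ (equivalently $\beta>2$) is used to its limit.
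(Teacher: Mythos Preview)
Your overall architecture (blocking, comparison with independent variables via Rosenthal, and recursion on $n$ with a fixed number of blocks) matches the paper. But two coupled choices in your plan would prevent it from closing.

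\textbf{The weak dependence you extract is too weak.} A purely multiplicative bound of the form $\bigl|\Ex{\prod v_i\circ\tf^{m_i}}-\Ex{\prod_{i\le p}}\Ex{\prod_{i>p}}\bigr|\le C^{r}(m_{p+1}-m_p)^{-\gamma}$ does not directly control $\Ex{|\sum_i Y_i|^{2\gamma}}$ when $2\gamma$ is non-integer; your appeal to ``a square-function estimate'' is not a proof. The paper instead applies the \fcb{} once to the single separately dynamically \holder{} function $G=F\circ(\varPhi_0,\dots,\varPhi_{k-1})$ with $F(y)=|y_0+\dots+y_{k-1}|^{p}$ ($p=2\gamma$ or $p=\gamma$). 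Iterating this over the blocks (Lemma~\ref{lemma:fcb_weak_dep}) yields the exact comparison
\[
\bigl|\Ex[\mu]{F(X_0,\dots,X_{k-1})}-\Ex{F(\shat X_0,\dots,\shat X_{k-1})}\bigr|\le C\sum_r(\ell_{r+1}-u_r)^{-\gamma}\bigl(\norminf{F}+\Lip(F)\textstyle\sum_{i,j}\sdsemi{\varPhi_i}{j}\bigr),
\]
after which one applies the \emph{classical} Rosenthal inequality to the genuinely independent $\shat X_i$. No approximate-independence Rosenthal is needed.

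\textbf{Short gaps cannot work.} Once you run the comparison above, the error term carries $\norminf{F}\sim n^{2\gamma}$ (and $\Lip(F)\cdot\sum\sdsemi{\varPhi_i}{j}\sim n^{2\gamma}$), so with gaps of size $\delta$ the decorrelation error is $\sim N\delta^{-\gamma}n^{2\gamma}$. For this to be $O(n^{\gamma})$ you need $\delta\gtrsim n$, flatly contradicting ``$\delta$ a slowly growing power of $n$'' and $N\delta\ll n^{1/2}$. The paper's remedy is the odd/even block trick: partition $[0,n)$ into $2k$ contiguous blocks $[a_i,a_{i+1})$ with $a_i=[in/(2k)]$ and $k$ a \emph{fixed} integer, and estimate $I_1=\sum_{i}S_v(a_{2i},a_{2i+1})$ and $I_2=\sum_{i}S_v(a_{2i+1},a_{2i+2})$ separately. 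For $I_1$ the odd blocks \emph{are} the gaps, of size $\sim n/(2k)$, so the decorrelation error is $Ck^{1+\gamma}n^{\gamma}$, which is $O(n^{\gamma})$ with a $k$-dependent constant absorbed into the base of the induction. There is no ``gap loss'' term at all, since $I_2$ is handled symmetrically. The recursion then reads $|S_v(n)|_{2\gamma}^{2\gamma}\le C(k^{1+\gamma}+k^{1-\gamma}D^{2\gamma})n^{\gamma}$, and one first fixes $k$ so that $Ck^{1-\gamma}\le\tfrac12$ and then $D$ so that $Ck^{1+\gamma}\le\tfrac12 D^{2\gamma}$.

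The same device handles part~\ref{item:iterated_moment_bd}: the Chen-type identity (Proposition~\ref{prop:chen_relation}) with the \emph{same} $2k$ contiguous blocks gives $\bbS_{v,w}(n)=\sum_{i<j}S_v(a_i,a_{i+1})S_w(a_j,a_{j+1})+\sum_i\bbS_{v,w}(a_i,a_{i+1})$; the cross term is bounded in $L^{\gamma}$ by Cauchy--Schwarz and part~\ref{item:moment_bd}, and the diagonal sum is split odd/even and treated exactly as above. All of your worries about a fatal $\delta n$ perturbation, and the need to ``rescue'' the gap contributions via part~\ref{item:moment_bd}, disappear because nothing is deleted.
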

\begin{remark}
		As mentioned above, by \cite[Theorem~2.10]{chevyrev2020deterministic} to obtain deterministic homogenisation results it suffices to prove the iterated WIP and iterated moment bounds. Let $ \tf $ be a mixing transformation modelled by a two-sided Young tower with tails of the form $ O(n^{-\beta})$ for some $\beta>2$. By \cite{melbourne2016note}, the Iterated WIP holds for all \holder{} observables. Together Theorem~\ref{thm:fcb_2_sided_yt} and Theorem~\ref{thm:moment_bd} imply that for all $ \eta\in(0,1] $ there exists $ C>0 $ such that
		\begin{enumerate}[label=(\alph*)]
			\item\label{item:yt_moment_bd} $\pnorm{S_v(n)}_{2(\beta-1)}\le Cn^{1/2}\norm{v}_\eta$.
			\item\label{item:yt_iterated_moment_bd} $\pnorm{\bbS_{v,w}(n)}_{\beta-1}\le Cn\norm{v}_\eta\norm{w}_\eta$.
		\end{enumerate}
	for all mean zero $ v,w\in \holsp(\ms)$, giving the required control of iterated moments.
	\end{remark}
	\section{Young towers}\label{section:yt}
	\subsection{Prerequisites}\label{section:yt_prereq}
	Young towers were first introduced by L.-S. Young in \cite{young1998statistical,young1999recurrence}, as a broad framework to prove decay of correlations for nonuniformly hyperbolic maps. Our presentation follows~\cite{bmtmaps}. In particular, this framework does not assume uniform contraction along stable manifolds and hence covers examples such as billiards.

\textbf{Gibbs-Markov maps:} Let \((\bar Y,\bar\mu_Y)\) be a probability space and let $\bar F\map{\bar Y}{\bar Y}$ be ergodic and measure-preserving. Let $\alpha$ be an at most countable, measurable partition of $\bar Y$. We assume that there exist constants $K>0,\ \theta\in (0,1)$ such that for all elements $a\in \alpha$:
\begin{itemize}
	\item (Full-branch condition) The map $\bar F|_{a}\map{a}{\bar Y}$ is a measurable bijection.
	\item For all distinct $y,y'\in \bar Y$ the separation time 
	\begin{equation*}
		s(y,y')=\inf\{n\ge 0: \bar F^n y,\, \bar F^n y' \text{ lie in distinct elements of }\alpha\}<\infty.
	\end{equation*}
	\item \label{item:GM_bdd_distortion} Define $\zeta\map{a}{\R^+}$ by $\zeta=d\bar\mu_Y/(d\, (F|_a^{-1})_* \bar\mu_Y)$. We have $ |\log \zeta(y)-\log \zeta(y')|\le K\theta^{s(y,y')} $ for all $ y,y'\in a $.
\end{itemize}
Then we call $\bar{F}\map{\bar Y}{\bar Y}$ a full-branch Gibbs-Markov map.

\textbf{Two-sided Gibbs-Markov maps}\quad Let $(Y,d)$ be a bounded metric space with Borel probability measure $\mu_Y$ and let $F\map{Y}{Y}$ be ergodic and measure-preserving. Let $\bar{F}\map{\bar{Y}}{\bar{Y}}$ be a full-branch Gibbs-Markov map with associated measure $\bar\mu_Y$.

We suppose that there exists a measure-preserving semi-conjugacy $\bar{\pi}\map{Y}{\bar{Y}}$, so $\bar{\pi}\circ F=\bar{F}\circ \bar{\pi}$ and $\bar{\pi}_{*}\mu_Y=\bar{\mu}_Y.$ The separation time $s(\cdot,\cdot)$ on $\bar Y$ lifts to a separation time on $Y$ given by $s(y,y')=s(\bar\pi y,\bar\pi y')$. Suppose that there exist constants $K>0,\theta\in(0,1)$ such that
\begin{equation}\label{eq:two_sided_GM_contraction}
	d(F^n y,F^n y')\le K(\theta^n+\theta^{s(y,y')-n}) \text{ for all }y,y'\in Y,n\ge 0.
\end{equation}
Then we call $F\map{Y}{Y}$ a \textit{two-sided Gibbs-Markov map.}

\textbf{One-sided Young towers:} Let $\bar\phi\map{\bar Y}{\Z^+}$ be integrable and constant on partition elements of $\alpha$. We define the one-sided Young tower $\bar\Delta=\bar Y^{\bar\phi}$ and tower map $\bar f\map{\bar \Delta}{\bar \Delta}$ by  
\begin{equation}\label{eq:one-sided_yt_def}
	\bar\Delta=\{(\bar y,\ell)\in \bar Y\times \Z: 0\le \ell<\bar \phi(y)\},\quad\bar{f}(\bar y,\ell)=\begin{cases}
		(\bar y,\ell+1),& \ell<\bar\phi(y)-1,\\
		(\bar{F}\bar{y},0),& \ell=\bar\phi(y)-1.
	\end{cases}
\end{equation}
We extend the separation time $s(\cdot,\cdot)$ to $\bar\Delta$ by defining
\[s((\bar y,\ell),(\bar y',\ell'))=\begin{cases}
	s(\bar{y},\bar{y}'),& \ell=\ell',\\
	0,& \ell\ne \ell'.
\end{cases}\]
Note that for $\theta\in (0,1)$ we can define a metric by $d_\theta(\bar{p},\bar{q})=\theta^{s(\bar{p},\bar{q})}$.

Now, $\bar{\mu}_\Delta=(\bar{\mu}_Y \times \text{counting})/\int_{\bar{Y}}\bar{\phi} d\bar{\mu}_Y$ is an ergodic $\bar{f}$-invariant probability measure on $\bar{\Delta}$.
\vspace{1em}

\textbf{Two-sided Young towers}\quad Let $F\map{Y}{Y}$ be a two-sided Gibbs-Markov map and let $\phi\map{Y}{\Z^+}$ be an integrable function that is constant on $\bar{\pi}^{-1}a$ for each $a\in \alpha$. In particular, $\phi$ projects to a function $\bar{\phi}\map{\bar{Y}}{M}$ that is constant on partition elements of $\alpha$.

Define the one-sided Young tower $\bar{\Delta}=\bar{Y}^{\bar{\phi}}$ as in \eqref{eq:one-sided_yt_def}. Using $\phi$ in place of $\bar{\phi}$ and $F\map{Y}{Y}$ in place of $\bar{F}\map{\bar{Y}}{\bar{Y}}$, we define the \textit{two-sided Young tower} $\Delta=Y^{\phi}$ and tower map $f\map{\Delta}{\Delta}$ in the same way. Likewise, we define an ergodic $ f $-invariant probability measure on $\Delta$ by $\mu_\Delta=(\mu_Y \times \text{counting})/\int_{Y}\phi\, d\mu_Y$. 

We extend $\bar{\pi}\map{Y}{\bar{Y}}$ to a map $\bar{\pi}\map{\Delta}{\bar{\Delta}}$ by setting $\bar{\pi}(y,\ell)=(\bar{\pi}y,\ell)$ for all $(y,\ell)\in \Delta$. Note that $\bar{\pi}$ is a measure-preserving semi-conjugacy; $\bar{\pi}\circ f=\bar{f}\circ \bar{\pi}$ and $\bar{\pi}_{*}\mu_\Delta=\bar{\mu}_\Delta$. The separation time $s$ on $\bar{\Delta}$ lifts to $\Delta$ by defining $s(y,y)=s(\bar{\pi}y,\bar{\pi}y').$

We are now finally ready to say what it means for a map to be modelled by a Young tower:

Let $ \tf\map{\ms}{\ms} $ be a measure-preserving transformation on a probability space $ (\ms,\mu) $. Suppose that there exists $Y\subset M$ measurable with $\mu(Y)>0$ such that:
\begin{itemize}
	\item $F=T^{\phi}\map{Y}{Y}$ is a two-sided Gibbs-Markov map with respect to some probability measure $\mu_Y$.
	\item $\phi$ is constant on partition elements of $\bar{\pi}^{-1}\alpha$, so we can define Young towers $\Delta=Y^\phi$ and $\bar\Delta=\bar{Y}^{\bar{\phi}}$.
	\item The map $\pi_M\map{\Delta}{M}$, $\pi_M(y,\ell)=T^\ell y$ is a measure-preserving semiconjugacy.
\end{itemize}
Then we say that $\tf\map{\ms}{\ms}$ is \textit{modelled by a (two-sided) Young tower.}

From now on we fix $ \beta>1 $ and suppose that $ \tf\map{\ms}{\ms} $ is a mixing transformation modelled by a Young tower $ \Delta $ with tails of the form $ \mu_Y(\phi\ge n)=O(n^{-\beta}). $
\begin{remark}
	Here we have not assumed that the tower map $ f\map{\Delta}{\Delta} $ is mixing. However, as in \cite[Theorem~2.1, Proposition~10.1]{chernov1999decay} and \cite{bmtmaps} the a priori knowledge that $ \mu $ is mixing ensures that this is irrelevant.
\end{remark}
Let $ \psi_n(x)=\#\{j=1,\dots,n\colon f^j x\in \Delta_0\} $ denote the number of returns to $ \Delta_0=\{(y,\ell)\in\Delta:\ell=0\} $ by time $ n $. The following bound is standard, see for example \cite[Lemma~5.5]{korepanov2019explicit}.
\begin{lemma}\label{lemma:neg_exp_moments}
	Let $\theta\in (0,1)$. Then there exists a constant $D_1>0$ such that
	\[\pushQED{\qed}
	\int_{\Delta}\theta^{\psi_n}d\mu_\Delta\le D_1 n^{-(\beta-1)} \text{ for }n\ge 1.\qedhere
	\popQED
	\]
\end{lemma}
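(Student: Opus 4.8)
The plan is to expand $\theta^{\psi_n}$ by the Abel-summation identity $\theta^{\psi_n}=(1-\theta)\sum_{j\ge0}\theta^{j}\indic{\psi_n\le j}$ and to estimate each event $\{\psi_n\le j\}$ through a tail bound for the Birkhoff sums $\phi_k=\sum_{i=0}^{k-1}\phi\circ F^{i}$ of the roof function; the geometric weight $\theta^{j}$ will then absorb the polynomial-in-$j$ loss coming from that bound, which is exactly what yields the \emph{optimal} rate $n^{-(\beta-1)}$ (truncating $\psi_n$ at a single level $j\asymp\log n$ would instead leave a spurious $(\log n)^{\beta+1}$ factor).

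First I would make the level combinatorics explicit. Writing a point of $\Delta$ as $(y,\ell)$ with $y\in Y$ and $0\le\ell<\phi(y)$, the $k$-th visit of $(y,\ell)$ to $\Delta_0$ occurs at time $\phi_k(y)-\ell$, so $\psi_n(y,\ell)\le j$ is equivalent to $\phi_{j+1}(y)>n+\ell$. Counting the admissible levels $\ell$ and recalling $\mu_\Delta=(\mu_Y\times\mathrm{counting})/\int_Y\phi\,d\mu_Y$ gives
\[
\mu_\Delta(\psi_n\le j)=\frac{1}{\int_Y\phi\,d\mu_Y}\int_Y\min\bigl(\phi,\,(\phi_{j+1}-n)^{+}\bigr)\,d\mu_Y\le\frac{1}{\int_Y\phi\,d\mu_Y}\int_n^{\infty}\mu_Y(\phi_{j+1}>t)\,dt .
\]
Since $\{\phi_k>t\}\subseteq\bigcup_{i=0}^{k-1}\{\phi\circ F^{i}>t/k\}$, the invariance of $\mu_Y$ and the hypothesis $\mu_Y(\phi\ge m)=O(m^{-\beta})$ (which, as $\phi\ge1$, may be read for all real arguments $>0$ after enlarging the constant) give $\mu_Y(\phi_k>t)\le k\,\mu_Y(\phi>t/k)=O(k^{\beta+1}t^{-\beta})$; integrating over $t\ge n$ — where $\beta>1$ is used — yields $\mu_\Delta(\psi_n\le j)=O\bigl((j+1)^{\beta+1}n^{-(\beta-1)}\bigr)$ uniformly in $j\ge0$, $n\ge1$.

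It then remains to sum:
\[
\int_\Delta\theta^{\psi_n}\,d\mu_\Delta=(1-\theta)\sum_{j\ge0}\theta^{j}\mu_\Delta(\psi_n\le j)\le C\,n^{-(\beta-1)}\sum_{j\ge0}\theta^{j}(j+1)^{\beta+1},
\]
and the last series converges because $\theta\in(0,1)$, which gives the claim. There is no deep obstacle here: the step needing the most care is the tail estimate for $\phi_{j+1}$ with its dependence on $j$ made explicit — it is precisely the summability of $\sum_{j}\theta^{j}(j+1)^{\beta+1}$ against it that produces the sharp exponent — together with the elementary but slightly fiddly bookkeeping for the count of admissible levels $\ell$, which the expression $\min(\phi,(\phi_{j+1}-n)^{+})$ records.
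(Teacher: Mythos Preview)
Your proof is correct. The paper does not actually prove this lemma: it records the bound as standard and defers to an external reference, so you have supplied a complete argument where the paper gives none. The Abel-summation expansion $\theta^{\psi_n}=(1-\theta)\sum_{j\ge0}\theta^{j}\indic{\psi_n\le j}$ together with the union-bound tail estimate $\mu_Y(\phi_k>t)\le Ck^{\beta+1}t^{-\beta}$ is indeed the standard route; your identification $\{\psi_n(y,\ell)\le j\}=\{\phi_{j+1}(y)>n+\ell\}$, the level-count formula $\min(\phi,(\phi_{j+1}-n)^{+})$, and the final summation are all accurate.
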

The transfer operator $L$ corresponding to $\bar{f}\map{\bar\Delta}{\bar\Delta}$ and $\bar{\mu}_\Delta$ is given pointwise by
\[(Lv)(x)=\sum_{\bar fz=x}g(z)v(z), \text{ where } g(y,\ell)=\begin{cases}
	\zeta(y),& \ell=\phi(y)-1,\\
	1,& \ell<\phi(y)-1
\end{cases}.\]
It follows that for $n\ge 1$, the operator $L^n$ is of the form $(L^n v)(x)=\sum_{\bar{f}^n z=x}g_n(z)v(z)$, where $g_n=\prod_{i=0}^{n-1}g\circ \bar f^i.$

We say that $z,z'\in \bar\Delta$ are in the same cylinder set of length $n$ if $\bar f^k z$ and $\bar f^k z'$ lie in the same partition element of $\bar\Delta$ for $0\le k\le n-1$. We use the following distortion bound (see e.g. \cite[Proposition~5.2]{korepanov2019explicit}):
\begin{prop}\label{prop:g_n-bdd-distortion}
	There exists a constant $K_1>0$ such that for all $n\ge1$, for all points $z,z'\in\bar\Delta$ which belong to the same cylinder set of length $n$,
	\[\pushQED{\qed}
	|g_n(z)-g_n(z')|\le Cg_n(z)d_\theta(\bar{f}^n z,\bar f^n z').\qedhere
	\popQED\]
\end{prop}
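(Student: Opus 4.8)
The plan is to pass to logarithms and exploit the product structure $g_n=\prod_{i=0}^{n-1}g\circ\bar f^i$: since $g\equiv 1$ except at the top level of each column, $\log g_n$ is a sum of finitely many $\log\zeta$ terms, to each of which the Gibbs-Markov distortion hypothesis applies. As $\zeta>0$ we have $g_n>0$, so by the mean value theorem $|g_n(z)-g_n(z')|\le g_n(z)\,e^{|\log g_n(z)-\log g_n(z')|}\,|\log g_n(z)-\log g_n(z')|$; hence, and since $d_\theta\le 1$, it suffices to find a constant $C_0>0$ (independent of $n,z,z'$) with
\[
\bigl|\log g_n(z)-\log g_n(z')\bigr|\le C_0\,d_\theta(\bar f^n z,\bar f^n z')
\]
whenever $z,z'$ lie in a common cylinder of length $n$. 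The proposition then holds with $C=C_0e^{C_0}$.

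To prove this, write $\log g_n(z)-\log g_n(z')=\sum_{i=0}^{n-1}(\log g(\bar f^i z)-\log g(\bar f^i z'))$. Since $z,z'$ lie in the same cylinder of length $n$, for $0\le i\le n-1$ the points $\bar f^i z,\bar f^i z'$ lie in a common partition element of $\bar\Delta$, so they share a level-coordinate and their $\bar Y$-coordinates lie in a common $a\in\alpha$; as the column height is constant on $\alpha$, the two orbits climb and reset in lockstep up to time $n-1$. Let $0\le n_1<\cdots<n_R\le n-1$ be the common times at which both orbits sit at the top level of their column. The $i$-th summand vanishes unless $i\in\{n_1,\dots,n_R\}$, and for $i=n_j$ it equals $\log\zeta(u_j)-\log\zeta(u_j')$ with $u_j=\bar F^{\,j-1}y_0$ and $u_j'=\bar F^{\,j-1}y_0'$, where $y_0,y_0'$ denote the $\bar Y$-coordinates of $z,z'$. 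Thus $\log g_n(z)-\log g_n(z')=\sum_{j=1}^R(\log\zeta(u_j)-\log\zeta(u_j'))$; in particular the bound is trivial if $R=0$, so assume $R\ge 1$.

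Now apply the distortion hypothesis $|\log\zeta(u_j)-\log\zeta(u_j')|\le K\theta^{s(u_j,u_j')}$. Since $u_R=\bar F^{\,R-1}y_0$ and $u_R'=\bar F^{\,R-1}y_0'$ lie in a common element of $\alpha$ we have $s(y_0,y_0')\ge R$, whence $s(u_j,u_j')=s(y_0,y_0')-(j-1)\ge 1$ for $1\le j\le R$. Moreover, after the final return both $\bar f^n z$ and $\bar f^n z'$ sit at the same level above $\bar F u_R$ and $\bar F u_R'$ respectively, so their level-coordinates agree and $s(\bar f^n z,\bar f^n z')=s(\bar F u_R,\bar F u_R')=s(u_R,u_R')-1$. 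Summing the geometric series,
\[
\sum_{j=1}^R\theta^{s(u_j,u_j')}=\sum_{j=1}^R\theta^{\,s(u_R,u_R')+(R-j)}\le\frac{\theta^{s(u_R,u_R')}}{1-\theta}=\frac{\theta}{1-\theta}\,d_\theta(\bar f^n z,\bar f^n z'),
\]
which yields the logarithmic estimate with $C_0=K\theta/(1-\theta)$ and hence the proposition.

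I expect the only delicate point to be the combinatorial bookkeeping in the middle step: checking that $z$ and $z'$ genuinely have the same sequence of return times to the base of the tower, and that after the last return the two orbits climb a common column, so that $\bar f^n z$ and $\bar f^n z'$ share a level-coordinate and the $\bar\Delta$-separation time collapses to the $\bar Y$-separation time $s(\bar F u_R,\bar F u_R')$. Once this is in place, the rest is the standard telescoping-plus-geometric-series distortion estimate.
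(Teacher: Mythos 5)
Your proof is correct: the telescoping of $\log g_n$ into $\log\zeta$ terms at common return times, the identification $s(\bar f^n z,\bar f^n z')=s(u_R,u_R')-1$, and the geometric-series summation of the Gibbs--Markov distortion bounds is exactly the standard argument. The paper does not prove this proposition itself but cites it as known (from Korepanov--Kosloff--Melbourne); your write-up supplies precisely the argument one would find there.
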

Let $ \theta\in(0,1). $ We say that $ v\map{\bar\Delta}{\R} $ is $ d_\theta $-Lipschitz if $ \norm{v}_\theta=\norminf{v}+\sup_{x\ne y}|v(x)-v(y)|/d_\theta(x,y)<\infty $. If $ f\map{\Delta}{\Delta} $ is mixing then by~\cite{young1999recurrence}, \[ \pnorm{L^n v-\int v\, d\bar\mu_\Delta}_1=O(n^{-(\beta-1)}\norm{v}_\theta) . \]
The same bound holds pointwise on $ \bar{\Delta}_0 $:
\begin{lemma}\label{lemma:transfer_op_ptwise_bd}
	Suppose that $ f\map{\Delta}{\Delta}$ is mixing. Then there exists $ D_2>0 $ such that for all $ d_\theta $-Lipschitz $ v\map{\bar\Delta}{\R} $, for any $ n\ge 1, $
	\[ \norminf{\indicd{\bar\Delta_0}L^n v-\int_{\bar\Delta} v\, d\bar\mu_\Delta}\le D_2 n^{-(\beta-1)}\norm{v}_\theta. \]
\end{lemma}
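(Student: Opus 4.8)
\noindent\emph{Proof plan.}
The plan is to reduce the pointwise estimate to the $L^1$ estimate of \cite{young1999recurrence} recalled above, by a splitting-time (``doubling'') argument. Since replacing $v$ by $v-\int_{\bar\Delta}v\, d\bar\mu_\Delta$ leaves $\indicd{\bar\Delta_0}\bigl(L^nv-\int v\, d\bar\mu_\Delta\bigr)$ unchanged and at most doubles $\norm{v}_\theta$, we may assume $\int_{\bar\Delta}v\, d\bar\mu_\Delta=0$, and it then suffices to bound $\pnorm{(L^nv)(x)}$ for $x\in\bar\Delta_0$. Besides the $L^1$ bound $\pnorm{L^mv}_1=O\bigl(m^{-(\beta-1)}\norm{v}_\theta\bigr)$, I use the uniform estimate $\norm{L^mv}_\theta\le C\norm{v}_\theta$ ($m\ge0$), which follows from Proposition~\ref{prop:g_n-bdd-distortion} by matching the $\bar f^m$-preimages of two points lying in a common partition element of $\bar\Delta$ (and may also be cited as standard). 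It is worth noting that one cannot conclude directly from these two facts: $L^nv$ is uniformly $d_\theta$-Lipschitz and small in $L^1$, but an $L^1$-small Lipschitz function can still be of order one on a partition element of tiny measure, so the pointwise bound on $\bar\Delta_0$ genuinely requires the extra structure used below.

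Put $n_1=\lceil n/2\rceil\ge1$, $n_2=\lfloor n/2\rfloor$, $w=L^{n_2}v$, so that $\int_{\bar\Delta}w\, d\bar\mu_\Delta=0$, $\pnorm{w}_1\le Cn^{-(\beta-1)}\norm{v}_\theta$, $\norm{w}_\theta\le C\norm{v}_\theta$, and $(L^nv)(x)=(L^{n_1}w)(x)$ on $\bar\Delta_0$. For $x\in\bar\Delta_0$, only the ``returning'' cylinders $C$ of length $n_1$ — those that $\bar f^{n_1}$ maps onto $\bar\Delta_0$ — carry a preimage of $x$, and each carries exactly one, $z_C(x)$; hence $(L^{n_1}w)(x)=\sum_C g_{n_1}(z_C(x))\,w(z_C(x))$ with $\sum_C g_{n_1}(z_C(x))=(L^{n_1}\mathbbm{1})(x)=1$ (using that $\bar\mu_\Delta$ is $\bar f$-invariant, whence $L\mathbbm{1}=\mathbbm{1}$). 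The transfer-operator identity $\int_{\bar\Delta}\phi\,\indicd{C}\, d\bar\mu_\Delta=\int_{\bar\Delta_0}g_{n_1}(z_C(x'))\,\phi(z_C(x'))\, d\bar\mu_\Delta(x')$, applied with $\phi\equiv1$ and with $\phi=w$, gives $\bar\mu_\Delta(C)=\int_{\bar\Delta_0}g_{n_1}(z_C(x'))\, d\bar\mu_\Delta(x')$ and the analogous formula for $\int_C w\, d\bar\mu_\Delta$; hence, by Proposition~\ref{prop:g_n-bdd-distortion}, the weight $g_{n_1}(z_C(x))$ is, uniformly in $x$ and $C$, comparable to $\bar\mu_\Delta(C)/\bar\mu_\Delta(\bar\Delta_0)$.

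I would then read $(L^{n_1}w)(x)$ as a distorted Riemann sum for $\int_{\bar\Delta}w\, d\bar\mu_\Delta=0$ and estimate the error in three moves. Replacing $w(z_C(x))$ by the cylinder average $\langle w\rangle_C=\bar\mu_\Delta(C)^{-1}\int_C w\, d\bar\mu_\Delta$ costs in total at most $\theta^{n_1}\norm{w}_\theta$, since two points of a length-$n_1$ cylinder lie at $d_\theta$-distance $\le\theta^{n_1}$; this term is exponentially small. Replacing $g_{n_1}(z_C(x))$ by $\bar\mu_\Delta(C)/\bar\mu_\Delta(\bar\Delta_0)$ costs, after integrating the distortion inequality over $x'\in\bar\Delta_0$, at most a constant times $\sum_C\pnorm{\int_C w\, d\bar\mu_\Delta}\le C\pnorm{w}_1$. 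The surviving main term equals $\bar\mu_\Delta(\bar\Delta_0)^{-1}\sum_C\int_C w\, d\bar\mu_\Delta=\bar\mu_\Delta(\bar\Delta_0)^{-1}\int_{\bigcup_C C}w\, d\bar\mu_\Delta=-\bar\mu_\Delta(\bar\Delta_0)^{-1}\int_{\bar\Delta\setminus\bigcup_C C}w\, d\bar\mu_\Delta$ (as $\int_{\bar\Delta}w\, d\bar\mu_\Delta=0$), which is bounded by $\bar\mu_\Delta(\bar\Delta_0)^{-1}\pnorm{w}_1$. Summing the three contributions, $\pnorm{(L^nv)(x)}\le C\bigl(\pnorm{w}_1+\theta^{n_1}\norm{v}_\theta\bigr)\le D_2 n^{-(\beta-1)}\norm{v}_\theta$ for every $x\in\bar\Delta_0$, using $\pnorm{w}_1\le Cn^{-(\beta-1)}\norm{v}_\theta$ and $\theta^{n_1}\le C n^{-(\beta-1)}$; undoing the normalisation $\int v\, d\bar\mu_\Delta=0$ completes the proof. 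The main obstacle is exactly the failure of the naive ``$L^1$-small plus equicontinuous'' argument noted above; the device replacing it — splitting time and regarding the action of $L^{n_1}$ on $w$ near $\bar\Delta_0$ as a distorted average whose error is governed by $\pnorm{w}_1$ together with the $d_\theta$-oscillation of $w$ within short cylinders — requires care so that each of the three error contributions is absorbed into $\pnorm{w}_1$ or is exponentially small.
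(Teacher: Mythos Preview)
Your overall strategy is sound and more elementary than the paper's route via operator renewal theory, but Step~1 contains a genuine gap. You claim that two points of a length-$n_1$ $\bar f$-cylinder $C$ lie at $d_\theta$-distance $\le\theta^{n_1}$. This is false: the metric $d_\theta=\theta^{s(\cdot,\cdot)}$ is built from the separation time of the \emph{induced} map $\bar F$ on $\bar Y$, not of $\bar f$ on $\bar\Delta$. For $z,z'\in C$ one only obtains $s(z,z')\ge r_C$, where $r_C=\psi_{n_1}(z)$ is the number of returns to $\bar\Delta_0$ along the cylinder, so $\operatorname{diam}_\theta(C)=\theta^{r_C}$. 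Since the tails of $\phi$ are merely polynomial, for every $n_1$ there are returning cylinders with $r_C=1$ (those sitting in a column of height $\ge n_1$), whose $d_\theta$-diameter is $\theta$; hence your oscillation term does not decay at all as stated.

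The repair is short. Bound the Step~1 error by $\norm{w}_\theta\sum_C g_{n_1}(z_C(x))\,\theta^{r_C}$, then use bounded distortion (Proposition~\ref{prop:g_n-bdd-distortion}) exactly as in your Step~2 to replace $g_{n_1}(z_C(x))$ by $\bar\mu_\Delta(C)/\bar\mu_\Delta(\bar\Delta_0)$ up to a constant, obtaining
\[
\sum_C g_{n_1}(z_C(x))\,\theta^{r_C}\le \frac{1+K_1}{\bar\mu_\Delta(\bar\Delta_0)}\sum_C\bar\mu_\Delta(C)\,\theta^{r_C}\le\frac{1+K_1}{\bar\mu_\Delta(\bar\Delta_0)}\int_{\bar\Delta}\theta^{\psi_{n_1}}\,d\bar\mu_\Delta=O\bigl(n^{-(\beta-1)}\bigr)
\]
by Lemma~\ref{lemma:neg_exp_moments}. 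With this correction your Steps~2 and~3 go through verbatim and the proof is complete. For comparison, the paper instead writes $\indicd{\bar\Delta_0}L^n v=\sum_{k+b=n}T_kB_bv$ using Gou\"ezel's partial transfer operators and invokes the renewal estimates $T_k=\Pi+O(k^{-(\beta-1)})$ and $\norm{B_b}=O(b^{-\beta})$; your corrected argument trades that machinery for the tail bound of Lemma~\ref{lemma:neg_exp_moments}.
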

This is a straightforward application of operator renewal theory developed by Sarig \cite{sarig2002subexponential} and Gou{\"e}zel \cite{gouezel2004sharp,gouezel2004vitesse}. However, we could not find a reference to this result in the literature so we provide a proof.
\begin{proof}
	Define partial transfer operators $ T_n $ and $ B_n $ as in \cite[Section 4]{gouezel2005berry}. Then 
	\[ \indicd{\bar\Delta_0}L^n v=\sum_{k+b=n}T_k B_b v. \]
Define an operator $ \Pi $ by $ \Pi v=\int_{\bar\Delta_0}v\, d\bar\mu_\Delta $. Then as in the proof of \cite[Theorem 4.6]{gouezel2005berry} we can write $ T_k=\Pi +E_k $ where $ \norm{E_k}=O(k^{-(\beta-1)}). $ Moreover, by~\cite[Theorem 4.6]{gouezel2005berry}, $ \norm{B_b}=O(b^{-\beta}) $ and \[\sum_{b=0}^\infty \int_{\bar\Delta_0}B_b v\, d\bar\mu_\Delta =\int_{\bar\Delta}v\ d\bar\mu_\Delta. \]
	It follows that
	\begin{align*}
		\indicd{\bar\Delta_0}L^n v&=\sum_{k+b=n}\Pi B_b v+\sum_{k+b=n}E_k B_b v\\
		&=\sum_{b=0}^n \int_{\bar\Delta_0}B_b v\, d\bar\mu_\Delta+\sum_{k+b=n}E_k B_b v\\
		&=\int_{\bar\Delta}v\, d\bar\mu_\Delta -\sum_{b=n+1}^\infty \int_{\bar\Delta_0}B_b v\, d\bar\mu_\Delta+\sum_{k+b=n}E_k B_b v.
	\end{align*}
The conclusion of the lemma follows by noting that the expressions $ \sum^\infty_{b= n+1}b^{-\beta}$ and 
\[\sum_{k+b=n}(k+1)^{-(\beta-1)}(b+1)^{-\beta}\]
are both $ O(n^{-(\beta-1)})$.
\end{proof}
Finally we recall the class of observables on $ \ms $ that are of interest to us:

\textbf{Dynamically \holder{} observables}\quad Fix $ \theta\in(0,1) $. For $ v\map{\ms}{\R}$, define
\[ \dhnorm{v}=\norminf{v}+\dsemi{v}, \quad \dsemi{v}=\sup_{y,y'\in Y,y\ne y'}\sup_{0\le\ell<\phi(y)}\frac{|v(\tf^{\ell}y)-v(\tf^{\ell}y')|}{d(y,y')+\theta^{s(y,y')}}. \]
We say that $ v $ is dynamically \holder{} if $ \dhnorm{v}<\infty $ and denote by $ \dholsp(\ms) $ the space of all such observables.

It is standard (see e.g. \cite[Proposition~7.3]{bmtmaps}) that \holder{} observables are also dynamically \holder{} for the classes of dynamical systems that we are interested in:
\begin{prop}
	Let $ \eta\in(0,1] $ and let $ d_0 $ be a bounded metric on $ \ms $. Let $ \holsp(\ms) $ be the space of observables that are $ \eta $-\holder{} with respect to $ d_0 $. Suppose that there exists $ K>0 $, $ \gamma_0\in(0,1) $ such that $ d_0(\tf^\ell y,\tf^\ell y')\le K(d_0(y,y')+\gamma_0^{s(y,y')}) $ for all $ y,y'\in Y,0\le \ell<\phi(y). $ 
	
	Then $ \holsp(\ms) $ is continuously embedded in $ \dholsp(\ms) $ where we may choose any $ \theta\in[\gamma_0^\eta,1) $ and $ d=d_0^{\eta'} $ for any $ \eta'\in(0,\eta]. $\qedhere
\end{prop}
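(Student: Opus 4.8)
The plan is to estimate the dynamically \holder{} seminorm $\dsemi{v}$ of an arbitrary $v\in\holsp(\ms)$ directly in terms of $[v]_\eta$: since the uniform norm $\norminf{v}$ is identical in $\holsp(\ms)$ and $\dholsp(\ms)$, continuity of the embedding reduces to an inequality $\dsemi{v}\le C[v]_\eta$ with $C$ depending only on $K$, $\eta$, $\eta'$ and $R:=\sup_{x,x'\in\ms}d_0(x,x')<\infty$. Throughout I take $d=d_0^{\eta'}$ and $\theta\in[\gamma_0^\eta,1)$; this interval is nonempty because $\gamma_0\in(0,1)$, and $d_0^{\eta'}$ is genuinely a metric by the standard snowflake argument (subadditivity of $t\mapsto t^{\eta'}$ gives the triangle inequality), which is valid since $\eta'\le\eta\le 1$.

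Fix $y,y'\in Y$ with $y\ne y'$ and $0\le\ell<\phi(y)$. I would apply the $\eta$-\holder{} estimate for $v$ with respect to $d_0$ followed by the dynamical contraction hypothesis to get
\[ |v(\tf^\ell y)-v(\tf^\ell y')|\le[v]_\eta\,d_0(\tf^\ell y,\tf^\ell y')^\eta\le K^\eta[v]_\eta\bigl(d_0(y,y')+\gamma_0^{s(y,y')}\bigr)^\eta. \]
Because $\eta\in(0,1]$ the map $t\mapsto t^\eta$ is subadditive, so the right-hand side is at most $K^\eta[v]_\eta\bigl(d_0(y,y')^\eta+(\gamma_0^\eta)^{s(y,y')}\bigr)$, where I also used $(\gamma_0^{s})^\eta=(\gamma_0^\eta)^{s}$. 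I then compare with $d(y,y')+\theta^{s(y,y')}=d_0(y,y')^{\eta'}+\theta^{s(y,y')}$: since $0<\gamma_0^\eta\le\theta<1$ and $x\mapsto x^{s(y,y')}$ is nondecreasing, $(\gamma_0^\eta)^{s(y,y')}\le\theta^{s(y,y')}$; and since $d_0(y,y')\le R$ and $t\mapsto t^{\eta-\eta'}$ is nondecreasing, $d_0(y,y')^\eta=d_0(y,y')^{\eta-\eta'}d_0(y,y')^{\eta'}\le R^{\eta-\eta'}d(y,y')$. Hence $|v(\tf^\ell y)-v(\tf^\ell y')|\le K^\eta\max\{1,R^{\eta-\eta'}\}\,[v]_\eta\,\bigl(d(y,y')+\theta^{s(y,y')}\bigr)$.

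Taking the supremum over $\ell$, $y$, $y'$ — the bracketed factor on the right does not depend on $\ell$ — gives $\dsemi{v}\le K^\eta\max\{1,R^{\eta-\eta'}\}\,[v]_\eta$, and therefore $\dhnorm{v}\le\max\{1,K^\eta\max\{1,R^{\eta-\eta'}\}\}\,\norm{v}_\eta$, which is the asserted continuous embedding. I do not expect any real obstacle here; the proof is a short chain of elementary inequalities. The one step meriting a little care is the case $\eta'<\eta$, where boundedness of $d_0$ is used to absorb the residual factor $d_0(y,y')^{\eta-\eta'}$ — this is exactly where the hypothesis that $d_0$ is bounded enters.
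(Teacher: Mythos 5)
The paper does not actually prove this proposition — it is stated with a citation to \cite[Proposition~7.3]{bmtmaps} and a QED mark — so there is no internal argument to compare against. Your direct verification is correct and is the standard one: apply the $\eta$-\holder{} bound for $v$, feed in the hypothesis $d_0(\tf^\ell y,\tf^\ell y')\le K(d_0(y,y')+\gamma_0^{s(y,y')})$, use subadditivity of $t\mapsto t^\eta$ to split the bracket, absorb the residual factor $d_0(y,y')^{\eta-\eta'}$ using boundedness of $d_0$, and replace $(\gamma_0^\eta)^{s}$ by $\theta^{s}$ via $\gamma_0^\eta\le\theta<1$. This yields $\dsemi{v}\le K^\eta\max\{1,R^{\eta-\eta'}\}[v]_\eta$ and hence the continuous embedding; all steps are valid and the constants are tracked correctly.
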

\subsection{Reduction to the case of a mixing Young tower}
In proofs involving Young towers it is often useful to assume that the Young tower is mixing, i.e.\ $gcd \{\phi(y):y\in Y\}=1. $ Hence in subsequent subsections we focus on proving the \fcb{} under this assumption:
\begin{lemma}\label{lemma:fcb_mixing_case}
	Suppose that $ \tf$ is modelled by a mixing two-sided Young tower whose return time has tails of the form $ O(n^{-\beta}) $. Then $ \tf $ satisfies the \fcb{} with rate $ n^{-(\beta-1)} $.
\end{lemma}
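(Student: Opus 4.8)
The plan is to transport \eqref{eq:fcb_bd} to the Young tower, reduce the two-sided tower to its one-sided factor by a contraction-along-stable-leaves argument, and then establish the resulting decoupling on the one-sided tower by the operator-renewal machinery, which is where the sharp exponent $\beta-1$ is produced. Since $\tf$ is modelled by $\Delta$, the semiconjugacy $\pi_M\map{\Delta}{\ms}$ is measure preserving with $\pi_M\circ f=\tf\circ\pi_M$, so both integrals in \eqref{eq:fcb_bd} equal the corresponding integrals over $\Delta$, with $\mu$ replaced by $\mu_\Delta$ and $G$ by $\tilde G=G\circ(\pi_M\times\cdots\times\pi_M)$. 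By definition of $\dsemi{\cdot}$ one has $|v\circ\pi_M(y,\ell)-v\circ\pi_M(y',\ell)|\le\dsemi{v}\,(d(y,y')+\theta^{s(y,y')})$, so $\tilde G$ is bounded and separately Lipschitz on $\Delta$ for the metric that on each level equals $d(y,y')+\theta^{s(y,y')}$, with $\norminf{\tilde G}$ and its coordinatewise Lipschitz constants dominated by $\norminf{G}+\sum_{i=0}^{q-1}\sdsemi{G}{i}$. Since the left side of \eqref{eq:fcb_bd} is always $\le 2\norminf{G}$ and vanishes for $p=0$, we may assume $p\ge1$ and that $m:=n_p-n_{p-1}$ is as large as we please, and it suffices to bound the difference of $\int_\Delta\tilde G(f^{n_0}z,\dots,f^{n_{q-1}}z)\,d\mu_\Delta$ and its decoupled counterpart from \eqref{eq:fcb_bd} by $C\,m^{-(\beta-1)}\bigl(\norminf{G}+\sum_i\sdsemi{G}{i}\bigr)$.

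\emph{Reduction to the one-sided tower.} Using the forward contraction of stable fibres supplied by \eqref{eq:two_sided_GM_contraction}, together with Lemma~\ref{lemma:neg_exp_moments} to ensure the orbit makes enough returns to the base during the gap $[n_{p-1},n_p]$, one approximates $\tilde G(f^{n_0}z,\dots,f^{n_{q-1}}z)$ --- with an error exponentially small in a fixed fraction of $m$, hence negligible against $m^{-(\beta-1)}$ --- by an expression in which the last $q-p$ arguments have been replaced by bounded functions factoring through $\bar f^{\,m'}\circ\bar\pi\circ f^{\,n_{p-1}}$ for some $m'\gtrsim m$, while the first $p$ arguments have been replaced by functions of the orbit of $z$ only up to time $n_{p-1}$ (the ``stable'' data); the same replacement is made in the decoupled integral. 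Setting $u=f^{\,n_{p-1}}z$ and integrating over the fibres of $\bar\pi$, the first block becomes, after conditioning, a function $\bar A(\bar z,\bar w)$ on $\bar\Delta\times\bar\Delta$ (the variable $\bar w$ recording the data used by the future block) that is $d_\theta$-Lipschitz in $\bar z$ --- averaging a function of stable data over a stable fibre smooths it in the unstable direction --- with $\norm{\bar A(\cdot,\bar w)}_\theta\le C\bigl(\norminf{G}+\sum_i\sdsemi{G}{i}\bigr)$ uniformly in $\bar w$, and bounded by $\norminf{G}$.

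\emph{Decoupling on the one-sided tower.} It remains to estimate the difference between $\int_{\bar\Delta}\bar A\bigl(\bar z,\bar f^{\,m'}\bar z\bigr)\,d\bar\mu_\Delta(\bar z)$ and $\int_{\bar\Delta^2}\bar A(\bar z,\bar w)\,d\bar\mu_\Delta(\bar z)\,d\bar\mu_\Delta(\bar w)$ --- the $q=2$ instance of the \fcb{} for $\bar f\map{\bar\Delta}{\bar\Delta}$. This is where operator renewal theory enters: decomposing $\bar\Delta$ according to the last visit to $\bar\Delta_0$ before time $m'$, one expresses the transfer iterate through the partial transfer operators behind Lemma~\ref{lemma:transfer_op_ptwise_bd}; Proposition~\ref{prop:g_n-bdd-distortion} controls the Lipschitz constants produced along the way and Lemma~\ref{lemma:neg_exp_moments} discards the orbits failing to return to the base in time, yielding $|L^{\,m'}\phi-\int\phi\,d\bar\mu_\Delta|\lesssim(m')^{-(\beta-1)}\norm{\phi}_\theta$ in the averaged sense needed here, applied with $\phi=\bar A(\cdot,\bar w)$. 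Since $m'\gtrsim m$ this gives the required bound and hence \eqref{eq:fcb_bd}.

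\emph{Main obstacle.} I expect the crux to be obtaining the \emph{optimal} exponent $\beta-1$ in the last stage. The tower transfer operator has no spectral gap and does not contract the $d_\theta$-Lipschitz norm, so a soft ``cut the orbit and approximate'' argument loses a factor and gives only rate $m^{-(\beta-1)/2}$; the full rate must be extracted from the sharp renewal asymptotics encoded in Lemma~\ref{lemma:transfer_op_ptwise_bd}, with the polynomial loss localised in the return-time tails and the long vertical excursions absorbed through Lemma~\ref{lemma:neg_exp_moments}. A secondary difficulty is the bookkeeping in the stable-leaf reduction: keeping $\norm{\bar A(\cdot,\bar w)}_\theta$ and the sup-norms of the future factors bounded by $\norminf{G}+\sum_i\sdsemi{G}{i}$ uniformly in $q$ and in the $n_i$ --- in particular when several of the first $p$ times $n_i$ are small, so the relevant windows must reach backwards in time.
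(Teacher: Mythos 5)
Your proposal follows essentially the same route as the paper: transport to the tower, approximate by a function that descends to the one-sided quotient using the stable contraction \eqref{eq:two_sided_GM_contraction} and the return-time count $\psi_R$, then decouple on $\bar\Delta$ via the transfer operator, bounded distortion (Proposition~\ref{prop:g_n-bdd-distortion}), and the renewal estimate (Lemma~\ref{lemma:transfer_op_ptwise_bd}). The paper implements the one-sided approximation with a piecewise-constant replacement on a partition $\mathcal{Q}_R$ built from $\psi_R$ and the separation time (Proposition~\ref{prop:H_R_approx}), and the decoupling step via Lemma~\ref{lemma:decay_correlations_generalisation}, which is slightly sharper than ``$q=2$ FCB'' since it needs Lipschitz regularity only in the first coordinate.

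One claim in your sketch is incorrect and, though it doesn't derail the argument, does misidentify where the rate comes from. You say the stable-leaf approximation error is ``exponentially small in a fixed fraction of $m$, hence negligible against $m^{-(\beta-1)}$''. In fact, the pointwise approximation error is controlled by $\theta^{\psi_R}$ --- exponentially small in the \emph{number of returns}, not in $R\asymp m$ --- and since the return time has polynomial tails, Lemma~\ref{lemma:neg_exp_moments} gives $\int_\Delta\theta^{\psi_R}\,d\mu_\Delta\lesssim R^{-(\beta-1)}$, which is precisely the target order and not negligible against it. (Large-deviation bounds for $\psi_R$ are polynomial, not exponential, for heavy-tailed $\phi$.) All three pieces of the paper's decomposition --- the two approximation errors $I_1, I_2$ and the one-sided decoupling $\nabla\bar H_R$ --- contribute at the same rate $R^{-(\beta-1)}$. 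Attributing the exponent $\beta-1$ solely to the renewal step, as you do, is a misreading that would surprise you if you wrote the estimate out.
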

\begin{proof}[Proof of Theorem~\ref{thm:fcb_2_sided_yt}]
	Let $ d=\gcd\{\phi(y):y\in Y\} $. Set $ \tf'=\tf^d $ and $ \phi'=\phi/d. $ Construct a mixing two-sided Young tower $ \Delta'=Y^{\phi'} $, with tower measure $ \mu'_\Delta. $ Define $ \pi'_{\ms}\map{\Delta'}{\ms} $ by $ \pi'_{\ms}(y,\ell)=(\tf')^\ell y.$ Then $ \tf' $ is modelled by $ \Delta' $ with ergodic, $ \tf' $-invariant measure $ (\pi'_{\ms})_{*}\mu'_\Delta.$ 
	Now by assumption the measure $ \mu $ is mixing so by the same argument as in \cite[Section~4.1]{bmtmaps} we must have $ \mu=(\pi'_\ms)_* \mu'_{\Delta} $.
	
	Let $ G\in \sepdholsp{q}(\ms) $ and fix integers $ 0\le n_0\le \cdots\le n_{q-1} $. Define $ n'_i=[n_i/d], r_i=n_i\! \mod d $. We need to bound 
	\begin{align*}
		\nabla G&=\int_\ms  G(\tf^{n_0}x,\dots,\tf^{n_{q-1}}x)d\mu(x)\\
		&\quad-\int_{\ms^2} G(\tf^{n_0}x_0,\dots,\tf^{n_{p-1}}x_0,\tf^{n_p}x_1,\dots,\tf^{n_{q-1}} x_1)d\mu(x_0)d\mu(x_1).
	\end{align*}
Define $ G'\map{\ms^q}{\R} $ by $ G'(x_0,\dots,x_{q-1})=G(\tf^{r_0}x_0,\dots,\tf^{r_{q-1}}x_{q-1}). $ Then 
	\begin{align*}
	\nabla G&=\int_\ms  G'((\tf')^{n'_0}x,\dots,(\tf')^{n'_{q-1}}x)d\mu(x)\\
	&\quad-\int_{\ms^2} G'({(\tf')}^{n'_0} x_0,\dots,(\tf')^{n'_{p-1}}x_0,(\tf')^{n'_p}x_1,\dots,(\tf')^{n'_{q-1}} x_1)d\mu(x_0)d\mu(x_1).
\end{align*}
Let $ [\cdot]_{\mathcal{H}'} $ denote the dynamically \holder{} seminorm as defined with $ \tf',\phi' $ in place of $ \tf,\phi $. Then by Lemma~\ref{lemma:fcb_mixing_case},
\begin{align*}
	|\nabla G|&\le C(n'_p-n'_{p-1})^{-\gamma}\biggl(\norminf{G'}+\sum_{i=0}^{q-1} [G']_{\mathcal{H}',i}\biggr)\\
	&\le Cd^{\gamma}(n_p-n_{p-1}-d)^{-\gamma}\biggl(\norminf{G}+\sum_{i=0}^{q-1}[G']_{\mathcal{H}',i}\biggr)
\end{align*}
Now fix $ 0\le i<q.$ Let $ x_0,\dots,x_{q-1}\in \ms $ and write
\begin{align*}
	v'(y)&=G'(x_0,\dots,x_{i-1},y,x_{i+1},\dots,x_{q-1})\\
	&=G(\tf^{r_0}x_0,\dots,\tf^{r_{i-1}}x_{i-1},\tf^{r_i}y,\tf^{r_{i+1}}x_{i+1},\dots,\tf^{r_{q-1}}x_{q-1})=v(\tf^{r_i}y).
\end{align*}
Let $ y,y'\in Y $ and $ 0\le \phi'(y)<\ell. $ Then
\[ |v'((\tf')^{\ell}y)-v'((\tf')^{\ell} y')|=|v(\tf^{d\ell +r_i}y)-v(\tf^{d\ell +r_i}y')|\le \sdsemi{G}{i}(d(y,y')+\theta^{s(y,y')}), \]
so $[G']_{\mathcal{H}',i} \le \sdsemi{G}{i}. $
\end{proof}
\subsection{Approximation by one-sided functions}\label{section:one_sided_yt_approx}
Let $ 0\le p<q $ and $ 0\le n_0\le \cdots\le n_{q-1} $ be integers and consider a function $ G\in \sepdholsp{q}(\ms) $. We wish to bound
\begin{align*}
	\nabla G&=\int_\ms  G(\tf^{n_0}x,\dots,\tf^{n_{q-1}}x)d\mu(x)\\
	&\qquad -\int_{\ms^2} G(\tf^{n_0}x_0,\dots,\tf^{n_{p-1}}x_0,\tf^{n_{p}}x_1,\dots,\tf^{n_{q-1}} x_1)d\mu^2(x_0,x_1).
\end{align*}
Now since $\pi_\ms\map{\Delta}{\ms}$ is a measure-preserving semiconjugacy
\begin{equation}\label{eq:nabla_tilde_H_def}
	\nabla G=\int_\Delta \widetilde H(x,f^{n_p}x)d\mu_\Delta(x)-\int_{\Delta^2} \widetilde H(x_0,x_1)d\mu^2_{\Delta}(x_0,x_1)=\nabla \widetilde{H}
\end{equation}
where $ \widetilde H \map{\Delta^2}{\R}$ is given by 
\[ \widetilde H(x,y)=\widetilde G(f^{n_0}x,f^{n_1}x,\dots,f^{n_{p-1}}x,f^{k_{p}}y,f^{k_{p+2}}y,\dots,f^{k_{q-1}}y), \]
where $ \stilde G=G\circ \pi_\ms $ and $ k_i=n_i-n_p $.

Let $ R\ge 1 $. We approximate $\widetilde{H}(f^R \cdot,f^R \cdot)$ by a function $\widetilde{H}_R$ that projects down onto $\bar{\Delta}$. Our approach is based on ideas from Appendix B of \cite{melbourne_terhesiu_2014}.

Recall that $ \psi_R(x)=\#\{j=1,\dots,R: f^j x\in \Delta_0\} $ denotes the number of returns to $\Delta_0=\{(y,\ell)\in \Delta: \ell=0\}$ by time $R$. Let $\mathcal{Q}_R$ denote the at most countable, measurable partition of $\Delta$ with elements of the form $\{x'\in \Delta: s(x,x')>2\psi_R(x)\}$, $x\in \Delta$. Choose a reference point in each partition element of $\mathcal{Q}_R$. For $ x\in \Delta $ let $ \hat{x} $ denote the reference point of the element that $ x $ belongs to. Define $ \widetilde{H}_R\map{\Delta^2}{\R} $ by
\begin{equation*}
	\widetilde{H}_R(x,y)=\widetilde{G}(f^R\widehat{f^{n_{0}}x},\dots,f^R\widehat{f^{n_{p-1}}x},f^R\widehat{f^{k_p}y},\dots,f^R\widehat{f^{k_{q-1}}y}).
\end{equation*}

\begin{prop}\label{prop:H_R_approx}
	The function $ \widetilde{H}_R $ lies in $ L^\infty(\Delta^2) $ and projects down to a function $ \bar{H}_R\in L^\infty(\bar\Delta^2) $. Moreover, there exists a constant $K_2>0$ depending only on $ \tf\map{\ms}{\ms} $ such that,
\begin{enumerate}
	\item\label{item:H_R_norminf} 
	$ \norminf{\bar{H}_R}=\norminf{\stilde{H}_R}\le \norminf{G}. $
	\item\label{item:H_R-H_diff}
	 For all $ x,y\in \Delta $,
	\[ |\stilde{H}(f^R x,f^R y)-\stilde{H}_R(x,y)|\le K_2\biggl(\sum_{i=0}^{p-1} \sdsemi{G}{i}\,\theta^{\psi_R(f^{n_i}x)}+\sum_{i=p}^{q-1} \sdsemi{G}{i}\,\theta^{\psi_R(f^{k_i}y)}\biggr). \]
	\item \label{item:H_R-transfer_op}
	For all $ \bar y\in \bar\Delta, $
	\[ \norm{L^{R+n_{p-1}}\bar{H}_R(\cdot,\bar y)}_\theta\le K_2\biggl(\norminf{G}+\sum_{i=0}^{p-1} \sdsemi{G}{i}\biggr). \]
\end{enumerate}
\end{prop}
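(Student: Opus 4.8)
\medskip

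\noindent\textbf{Sketch of the intended proof.} I would establish the three items in turn; items~\ref{item:H_R_norminf} and~\ref{item:H_R-H_diff} amount to bookkeeping with separation times, whereas item~\ref{item:H_R-transfer_op} contains the real content. For item~\ref{item:H_R_norminf}: since $\widetilde H_R$ is obtained by evaluating $\widetilde G=G\circ\pi_\ms$ at points of $\Delta$, one has $\norminf{\widetilde H_R}\le\norminf{\widetilde G}\le\norminf{G}$ immediately. To see that $\widetilde H_R$ descends to $\bar\Delta^2$, note that if $\bar\pi x=\bar\pi x'$ then $s(f^{n_i}x,f^{n_i}x')=\infty>2\psi_R(f^{n_i}x)$, so $f^{n_i}x$ and $f^{n_i}x'$ lie in the same element of $\mathcal{Q}_R$ and thus have a common reference point; likewise in the $y$-slots. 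Hence $\widetilde H_R(x,y)$ depends only on $(\bar\pi x,\bar\pi y)$, and $\norminf{\bar H_R}=\norminf{\widetilde H_R}$ because $\bar\pi_{*}\mu_\Delta=\bar\mu_\Delta$.

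For item~\ref{item:H_R-H_diff} I would telescope, replacing the arguments $f^Rf^{n_i}x$ and $f^Rf^{k_i}y$ of $\widetilde G$ by $f^R\widehat{f^{n_i}x}$ and $f^R\widehat{f^{k_i}y}$ one at a time. Since $\widehat{f^{n_i}x}$ lies in the $\mathcal{Q}_R$-element of $f^{n_i}x$, writing $f^{n_i}x=(u',\ell)$ and $\widehat{f^{n_i}x}=(u,\ell)$ we have $s(u,u')>2\psi_R(f^{n_i}x)$ and $\psi_R(\widehat{f^{n_i}x})=\psi_R(f^{n_i}x)=:m$, whence $f^Rf^{n_i}x=(F^mu',\ell')$ and $f^R\widehat{f^{n_i}x}=(F^mu,\ell')$ lie at a common tower level $\ell'$ and $s(F^mu,F^mu')=s(u,u')-m>m$. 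Combining the separately dynamically \holder{} seminorm with~\eqref{eq:two_sided_GM_contraction} then bounds the change by $\sdsemi{G}{i}\bigl(d(F^mu,F^mu')+\theta^{s(F^mu,F^mu')}\bigr)\le(2K+1)\sdsemi{G}{i}\theta^{\psi_R(f^{n_i}x)}$, and the identical argument (with $k_i$, $y$ in place of $n_i$, $x$) handles the $y$-slots; summing over the $q$ arguments gives item~\ref{item:H_R-H_diff}.

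For item~\ref{item:H_R-transfer_op}, the estimate $\norminf{L^{R+n_{p-1}}\bar H_R(\cdot,\bar y)}\le\norminf{\bar H_R}\le\norminf{G}$ is immediate since $\norminf{L^{N}w}\le\norminf{w}$. For the $d_\theta$-Lipschitz seminorm it suffices to treat $a,b\in\bar\Delta$ with $s(a,b)\ge1$ (the case $s(a,b)=0$ being covered by the $L^\infty$ bound). Put $N=R+n_{p-1}$ and let $z\mapsto z'$ be the pairing of the $\bar f^{N}$-preimages of $a$ and of $b$ lying in a common cylinder of length $N$, so that $s(z,z')=s(a,b)+\psi_N(z)$. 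Then
\[ (L^{N}\bar H_R(\cdot,\bar y))(a)-(L^{N}\bar H_R(\cdot,\bar y))(b)=\sum_z\bigl(g_{N}(z)-g_{N}(z')\bigr)\bar H_R(z,\bar y)+\sum_z g_{N}(z')\bigl(\bar H_R(z,\bar y)-\bar H_R(z',\bar y)\bigr); \]
the first sum is $O\bigl(\norminf{G}\,\theta^{s(a,b)}\bigr)$ by Proposition~\ref{prop:g_n-bdd-distortion} and $\sum_z g_N(z)=1$, and since $\sum_z g_N(z')=1$ the whole estimate reduces to showing $|\bar H_R(z,\bar y)-\bar H_R(z',\bar y)|\le C\,\theta^{s(a,b)}\sum_{i=0}^{p-1}\sdsemi{G}{i}$ for each paired $z,z'$. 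For this I would lift to $\xi,\xi'\in\Delta$ over $z,z'$ and telescope over the first $p$ arguments of $\widetilde G$ (the $y$-arguments being frozen by $\bar y$). For $i<p$ one has $s(f^{n_i}\xi,f^{n_i}\xi')=s(a,b)+\psi_{N-n_i}(f^{n_i}\xi)\ge s(a,b)+\psi_R(f^{n_i}\xi)$ because $N-n_i\ge R$; if $f^{n_i}\xi'$ lies in the $\mathcal{Q}_R$-element of $f^{n_i}\xi$ the $i$th argument is unchanged, and otherwise $2\psi_R(f^{n_i}\xi)\ge s(f^{n_i}\xi,f^{n_i}\xi')$ forces $m:=\psi_R(f^{n_i}\xi)\ge s(a,b)$. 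In that case the ultrametric inequality for $s$ gives $s(\widehat{f^{n_i}\xi},\widehat{f^{n_i}\xi'})\ge s(a,b)+m$, both reference points have $\psi_R=m$ and sit at a common level, so $f^R\widehat{f^{n_i}\xi}$ and $f^R\widehat{f^{n_i}\xi'}$ lie at a common level with base points at $s$-distance $\ge s(a,b)$; since also $m\ge s(a,b)$, \eqref{eq:two_sided_GM_contraction} bounds their displacement by $2K\theta^{s(a,b)}$, so that coordinate contributes at most $(2K+1)\sdsemi{G}{i}\theta^{s(a,b)}$. Summing over $i$ gives the claim, and combining it with the distortion term and the $L^\infty$ bound gives item~\ref{item:H_R-transfer_op} with a constant $K_2$ depending only on $\tf$.

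The crux — and the step I expect to be most delicate to write cleanly — is the bound on $\bar H_R(z,\bar y)-\bar H_R(z',\bar y)$: one needs the gain $\theta^{s(a,b)}$ \emph{uniformly over the paired preimages and in $R$}. The mechanism, and the reason for iterating $R+n_{p-1}$ rather than $R$ times, is that a failure of the reference points $\widehat{f^{n_i}\xi}$, $\widehat{f^{n_i}\xi'}$ to coincide forces at least $s(a,b)$ returns to the base of the tower within the subsequent $R$ steps, which is precisely what absorbs the separation lost on pushing forward by $f^R$; the extra $n_{p-1}$ iterates compensate the up to $n_{p-1}$ returns possibly consumed over the first $n_i\le n_{p-1}$ steps.
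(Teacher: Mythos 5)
Your proposal is correct and follows essentially the same route as the paper's proof: telescoping over coordinates using the separation-time bookkeeping and the two-sided contraction \eqref{eq:two_sided_GM_contraction} for items \ref{item:H_R_norminf}--\ref{item:H_R-H_diff}, and for item \ref{item:H_R-transfer_op} the split into a distortion term (Proposition~\ref{prop:g_n-bdd-distortion}) plus a difference of $\bar H_R$ values, which is again bounded by telescoping. Your write-up makes explicit the observation that when $\widehat{f^{n_i}\xi}\ne\widehat{f^{n_i}\xi'}$ one necessarily has $\psi_R(f^{n_i}\xi)\ge s(a,b)$, a step the paper uses implicitly in passing from $\theta^{\min\{s(\bar x,\bar x'),\psi_R(a_i)\}}$ to $\theta^{s(\bar x,\bar x')}$; making this explicit is a small improvement in exposition but not a change of method.
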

Here we recall that $ \norm{\cdot}_\theta $ denotes the $ d_\theta $-Lipschitz norm, which is given by $ \norm{v}_\theta=\norminf{v}+\sup_{x\ne y}|v(x)-v(y)|/d_\theta(x,y)$ for $ v\map{\bar\Delta}{\R} $.
\begin{proof}
	We follow the proof of Proposition~7.9 in~\cite{bmtmaps}.
	
	By definition $ \widetilde{H}_R $ is piecewise constant on a measurable partition of $ \Delta^2 $. Moreover, this partition projects down to a measurable partition on $ \bar\Delta $, since it is defined in terms of $ s $ and $ \psi_R $ which both project down to $ \bar\Delta $. It follows that $ \bar{H}_R $ is well-defined and measurable. Part \ref*{item:H_R_norminf} is immediate.
	
	Let $ x,y\in \Delta $. Write $\stilde{H}(f^R x,f^R y)-\stilde{H}_R(x,y) =I_1+I_2 $ where
	\begin{align*}
		I_1&=\widetilde{G}(f^R f^{n_{0}}x,\dots,f^R f^{n_{p-1}}x,f^R f^{k_{p}}y,\dots,f^R f^{k_{q-1}}y)\\ 
		&\qquad\qquad- \widetilde{G}(f^R\widehat{f^{n_{0}}x},\dots,f^R\widehat{f^{n_{p-1}}x},f^R f^{k_{p}}y,\dots,f^R f^{k_{q-1}}y),\\
		I_2&=\widetilde{G}(f^R\widehat{f^{n_{0}}x},\dots,f^R\widehat{f^{n_{p-1}}x},f^R f^{k_{p}}y,\dots,f^R f^{k_{q-1}}y)\\
		&\qquad\qquad - \widetilde{G}(f^R\widehat{f^{n_{0}}x},\dots,f^R\widehat{f^{n_{p-1}}x},f^R\widehat{f^{k_{p}}y},\dots,f^R\widehat{f^{k_{q-1}}y}).
	\end{align*}
Let $ a_i=f^{n_i}x$ and $ b_i=f^R f^{k_i}y$. By successively substituting $ a_i $ by $ \hat{a}_i $,
\begin{align}
	I_1&=\stilde{G}(f^R a_0,\dots,f^R a_{p-1},b_{p},\dots,b_{q-1})-\stilde{G}(f^R \hat{a}_0,\dots,f^R \hat{a}_{p-1},b_p,\dots,b_{q-1})\nonumber\\
	&=\sum_{i=0}^{p-1} \bigl(\widetilde G(f^R a_0,\dots,f^R a_{i-1}, f^R a_i, f^R \hat{a}_{i+1},f^R \hat{a}_{p-1},b_p,\dots,b_{q-1})\nonumber\\
	&\qquad\quad - \widetilde G(f^R a_0,\dots,f^R a_{i-1}, f^R \hat{a}_i, f^R \hat{a}_{i+1},f^R \hat{a}_{p-1},b_p,\dots,b_{q-1}) \bigr)\nonumber\\
	&=\sum_{i=0}^{p-1}\bigl(\tilde v_i(f^R a_i)-\tilde v_i(f^R \hat a_i)\bigr)
\end{align}
where $\tilde v_i(x)=\tilde G(f^R a_0,\dots,f^R a_{i-1},x,f^R \hat a_{i+1},\dots,f^R \hat a_{p-1},b_p,\dots,b_{q-1}). $

Fix $ 0\le i<p $. Since $ a_i $ and $ \hat{a}_i $ are in the same partition element, $ s(a_i,\hat{a}_i)>2\psi_R(a_i) $. Write $ a_i=(y,\ell), \hat a_i=(\hat y,\ell). $ Then $ f^R a_i=(F^{\psi_R(a_i)}y,\ell_1)$ and similarly $f^R \hat a_i=(F^{\psi_R(a_i)}\hat y,\ell_1)$, where $ \ell_1=\ell+R-\Phi_{\psi_R(a_i)}(y) $. (Here, $ \Phi_k=\sum_{j=0}^{k-1}\phi\circ F^k $.) Now by the definition of $ \sdsemi{G}{i} $ and \eqref{eq:two_sided_GM_contraction},
\begin{align*}
	|\tilde v_i(f^R a_i)-\tilde v_i(f^R \hat a_i)|&=|\tilde v_i(F^{\psi_R(a_i)}y,\ell_1)-\tilde v_i(F^{\psi_R(a_i)}\hat y,\ell_1)|\\
	&\le \sdsemi{G}{i}(d(F^{\psi_R(a_i)}y,F^{\psi_R(a_i)}y')+\theta^{s(F^{\psi_R(a_i)}y,F^{\psi_R(a_i)}y')})\\
	&\le (K+1)\sdsemi{G}{i}(\theta^{\psi_R(a_i)}+\theta^{s(a_i,a_i')-\psi_R(a_i)})\\
	&\le 2(K+1)\sdsemi{G}{i}\theta^{\psi_R(a_i)}.
\end{align*}

Thus
\[ |I_1|\le 2(K+1)\sum_{i=0}^{p-1} \sdsemi{G}{i}\theta^{\psi_R(f^{n_i}x)}. \]
By a similar argument,
\[|I_2|\le 2(K+1)\sum_{i=p}^{q-1} \sdsemi{G}{i}\theta^{\psi_R(f^{k_i}y)},\]
completing the proof of \ref*{item:H_R-H_diff}.

Let $\bar x,\bar x',\bar y\in \bar\Delta. $ Recall that 
\[ L^{R+n_{p-1}}\bar{H}_R(\cdot,\bar{y})(\bar x)=\sum_{\bar{f}^{R+n_{p-1}}\bar z=\bar x}g_{R+n_{p-1}}(\bar z)\bar{H}_R(\bar z,\bar y) .\]
It follows that $ \norminf{L^{R+n_{p-1}}\bar{H}_R(\cdot,\bar{y})}\le \norminf{\bar{H}_R}\le \norminf{G}.$ If $ d_\theta(\bar x,\bar x')=1 $, then
\[|L^{R+n_{p-1}}\bar{H}_R(\cdot,\bar{y})(\bar x)-L^{R+n_{p-1}}\bar{H}_R(\cdot,\bar{y})(\bar x')|\le 2\norminf{G}=2\norminf{G}d_\theta(\bar x,\bar x').  \]
Otherwise, we can write $ L^{n_{p-1}+R}\bar{H}_R(\cdot,\bar{y})(\bar x)-L^{n_{p-1}+R}\bar{H}_R(\cdot,\bar{y})(\bar x')=J_1+J_2 $ where
\begin{align*}
	J_1&=\sum_{\bar{f}^{n_{p-1}+R}\bar z=\bar x}\bigl(g_{n_{p-1}+R}(\bar z)-g_{n_{p-1}+R}(\bar z')\bigr)\bar{H}_R(\bar z,\bar y),\\
	J_2&=\sum_{\bar{f}^{n_{p-1}+R}\bar z'=\bar x'}  g_{n_{p-1}+R}(\bar z')\bigl(\bar{H}_R(\bar z,\bar y)-\bar{H}_R(\bar z',\bar y)\bigr).
\end{align*}
Here, as usual we have paired preimages $ \bar z,\bar z' $ that lie in the same cylinder set of length $ n_{p-1}+R $. By bounded distortion (Proposition~\ref{prop:g_n-bdd-distortion}), $ |J_1|\le C\norminf{G}d_\theta(\bar x,\bar x'). $ We claim that $ |\bar{H}_R(\bar z,\bar y)-\bar{H}_R(\bar z',\bar y)|\le K_2\sum_{i=0}^{p-1} \sdsemi{G}{i}d_\theta(\bar x,\bar x') $. It follows that $ |J_2|\le K_2\sum_{i=0}^{p-1} \sdsemi{G}{i}d_\theta(\bar x,\bar x'). $

It remains to prove the claim.
Choose points $ z,z',y\in \Delta $ that project to $\bar z,\bar z',\bar y $. Let $ a_i=f^{n_i}z,a_i'=f^{n_i}z',b_i=f^{R+n_i}y. $ As in part~\ref*{item:H_R-H_diff},
\begin{align*}
	\bar{H}_R(\bar z,\bar y)-\bar{H}_R(\bar z',\bar y)=\stilde{H}_R(z,y)-\stilde{H}_R(z',y)
	=\sum_{i=0}^{p-1}(\tilde w_i(f^R \hat a_i)-\tilde w_i(f^R \hat a'_i))
\end{align*}
where $ \tilde w_i(x)=\tilde G(f^R \hat a_0,\dots,\hat a_{i-1}, x,f^R \hat a'_{i+1},\dots,\hat a'_{p-1},\hat b_p,\dots,\hat b_{q-1}) $. 

Let $ 0\le i< p $. We bound $ E_i=\tilde w_i(f^R \hat a_i)-\tilde w_i(f^R \hat a'_i) $. Without loss suppose that 
\[ \psi_R(\hat{a}'_i)\ge s(\hat a_i,\hat a'_i)-\psi_R(\hat a_i), \]
for otherwise $\hat a_i $ and $\hat a'_i $ are reference points of the same partition element so $ \hat a_i=\hat a'_i $ and $ E_i=0 $. Now as in part~\ref*{item:H_R-H_diff}, \[ E_i\le (K+1) (\theta^{\psi_R(\hat a_i)}+\theta^{s(\hat a_i,\hat a'_i)-\psi_R(\hat a_i)}).\]
Note that
\begin{align*}
	s(\hat a_i,\hat a'_i)-\psi_R(\hat a_i)&\ge \min\{s(\hat a_i, a_i),s(a_i, a'_i),s(a'_i,\hat a'_i)\}-\psi_R(\hat a_i).
\end{align*}
Since $ \bar z,\bar z' $ lie in the same cylinder set of length $ R+n_{p-1}$, we have $ \psi_R(a_i)=\psi_R(a'_i) $ and  
\begin{align*}
	s(a_i,a'_i)=s(\bar f^{n_i}\bar z,\bar f^{n_i}\bar z')&=s(\bar x,\bar x')+\psi_{R+n_{p-1}-n_i}(\bar{f}^{n_i}\bar z)\\
	&\ge s(\bar x,\bar x')+\psi_R(a_i).
\end{align*}
Now $ a_i $ and $ \hat{a}_i $ are contained in the same partition element so $ s(\hat a_i,a_i)-\psi_R(\hat a_i) \ge \psi_R(\hat a_i)$ and 
\[ \psi_R(\hat a_i)=\psi_R(a_i)=\psi_R(a_i')=\psi_R(\hat a'_i). \]

Hence $ s(\hat a_i,\hat a_i')-\psi_R(\hat a_i)\ge \min\{s(\bar x,\bar x'),\psi_R(a_i)\}$. It follows that $ E_i\le 2(K+1)\theta^{s(\bar x,\bar x')} $, completing the proof of the claim.
\end{proof}
\subsection{Proof of Lemma~\ref{lemma:fcb_mixing_case}}
We continue to assume that $ \beta>1 $ and that $ \mu_Y(\phi\ge n)=O(n^{-\beta}).$ We also assume that $ \gcd\{\phi(y):y\in Y\}=1 $ so that $ f\map{\Delta}{\Delta} $ is mixing.
\begin{lemma}\label{lemma:decay_correlations_generalisation}
	Let $ \theta\in(0,1). $ There exists $ D_3>0 $ such that for any $ V\in L^{\infty}(\bar\Delta^2) $,
	\begin{equation*}
		\biggl|\int_{\bar\Delta} V(x,\bar f^n x)d\bar\mu_\Delta(x)
		-\int_{\bar\Delta^2} V(x_0,x_1)d\bar\mu^2_\Delta(x_0,x_1)\biggr|\le D_3 n^{-(\beta-1)}\sup_{y\in \bar\Delta}\norm{V(\cdot,y)}_\theta
	\end{equation*}
	for all $ n\ge 1. $
\end{lemma}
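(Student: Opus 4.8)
The plan is to rewrite the correlation $\int_{\bar\Delta}V(x,\bar f^n x)\,d\bar\mu_\Delta(x)$ by means of the transfer operator $L$ of $\bar f$ and then to reduce it to the pointwise estimate on the base $\bar\Delta_0$ supplied by Lemma~\ref{lemma:transfer_op_ptwise_bd}. Write $\Lambda=\sup_{y\in\bar\Delta}\norm{V(\cdot,y)}_\theta$ and $\bar V(y)=\int_{\bar\Delta}V(x,y)\,d\bar\mu_\Delta(x)$, so $|\bar V|\le\Lambda$ and, for each fixed $y$, the function $V(\cdot,y)$ is $d_\theta$-Lipschitz with $\norm{V(\cdot,y)}_\theta\le\Lambda$.

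First I would record a ``Fubini for transfer operators'' identity: since for $\bar\mu_\Delta$-a.e.\ $w$ the measure $\sum_{\bar f^n z=w}g_n(z)\delta_z$ is the conditional law of $x$ given $\bar f^n x=w$ — this being the defining property of $g_n$; it is checked first for products $V(x,y)=\phi(x)\psi(y)$ and then extended to $V\in L^\infty(\bar\Delta^2)$ —
\[
\int_{\bar\Delta}V(x,\bar f^n x)\,d\bar\mu_\Delta(x)=\int_{\bar\Delta}\bigl(L^n[V(\cdot,w)]\bigr)(w)\,d\bar\mu_\Delta(w),\qquad
\int_{\bar\Delta^2}V\,d\bar\mu_\Delta^2=\int_{\bar\Delta}\bar V(w)\,d\bar\mu_\Delta(w).
\]
Thus the quantity to be bounded equals $\int_{\bar\Delta}\bigl[(L^n[V(\cdot,w)])(w)-\int_{\bar\Delta}V(\cdot,w)\,d\bar\mu_\Delta\bigr]\,d\bar\mu_\Delta(w)$, and for each fixed $w$ this integrand is governed by the action of $L^n$ on $V(\cdot,w)$, which lies in exactly the class covered by Lemma~\ref{lemma:transfer_op_ptwise_bd} — except that that lemma controls $L^n$ only on $\bar\Delta_0$, whereas here it is evaluated at an arbitrary $w\in\bar\Delta$.

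To close this gap I would use the tower structure. If $w=(\bar y,\ell)$ lies at level $\ell<n$, then reading off the definition of $\bar f$ in \eqref{eq:one-sided_yt_def} and of the weight $g$, the backward $\bar f$-orbit of $w$ descends uniquely, with all weights equal to $1$, through the column to the base point $w_0=(\bar y,0)\in\bar\Delta_0$, so that $(L^n v)(w)=(L^{n-\ell}v)(w_0)$ for every $v$. Applying Lemma~\ref{lemma:transfer_op_ptwise_bd} to $v=V(\cdot,w)$ then yields, for every $w$ at level $\ell<n$,
\[
\Bigl|(L^n[V(\cdot,w)])(w)-\int_{\bar\Delta}V(\cdot,w)\,d\bar\mu_\Delta\Bigr|\le D_2\,(n-\ell)^{-(\beta-1)}\,\Lambda .
\]
Finally I would split the $w$-integral according to level: on $\{\ell<n/2\}$ the last display is $\le D_2(n/2)^{-(\beta-1)}\Lambda$, contributing $O(\Lambda n^{-(\beta-1)})$; on $\{\ell\ge n/2\}$ I bound the integrand crudely by $2\Lambda$ (using $\norminf{L^n v}\le\norminf{v}$ and $|\bar V|\le\Lambda$) and use that, since $\bar\mu_\Delta=(\bar\mu_Y\times\mathrm{counting})/\int_{\bar Y}\bar\phi\,d\bar\mu_Y$, an elementary computation with the return-time tail gives $\bar\mu_\Delta(\{\ell\ge k\})=O\bigl(\sum_{m\ge k}m^{-\beta}\bigr)=O(k^{-(\beta-1)})$. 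Both pieces are $O(\Lambda n^{-(\beta-1)})$, which is the assertion.

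The steps requiring the most care — rather than a genuine obstacle — will be the justification of the transfer-operator identity for general $V\in L^\infty(\bar\Delta^2)$ and the verification of the column-descent identity $(L^n v)(w)=(L^{n-\ell}v)(w_0)$ straight from the tower definitions; granted these, what remains is a short two-region estimate resting on Lemma~\ref{lemma:transfer_op_ptwise_bd} and the polynomial tail of $\phi$.
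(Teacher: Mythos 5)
Your proposal is correct and follows essentially the same route as the paper: rewrite the correlation via $L^n$, observe that for $w$ at level $\ell$ the evaluation $(L^n u_w)(w)$ reduces to $(L^{n-\ell}u_w)(w_0)$ on the base, apply Lemma~\ref{lemma:transfer_op_ptwise_bd} there, and split the $w$-integral at level $n/2$ using the tail bound $\bar\mu_\Delta(\bar\Delta_\ell)=O((\ell+1)^{-\beta})$. The only cosmetic difference is your justification of the identity $\int V(x,\bar f^n x)\,d\bar\mu_\Delta=\int (L^n[V(\cdot,w)])(w)\,d\bar\mu_\Delta(w)$ via conditional laws and density of products; the paper gets it more directly by writing $(L^n v)(x)=\sum_{\bar f^n z=x}g_n(z)V(z,\bar f^n z)$ and substituting $\bar f^n z=x$ inside the preimage sum.
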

\begin{remark}
	Let $ V(x,y)=v(x)w(y) $ where $v$ is $ d_\theta $-Lipschitz and $ w\in L^\infty(\bar\Delta) $. Then we obtain that
	\[ \left|\int_{\bar\Delta}v\, w\circ \bar{f}^n d\bar\mu_{\Delta}-\int_{\bar\Delta}v\, d\bar\mu_\Delta\int_{\bar\Delta}w\, d\bar\mu_\Delta\right|\le D_3 n^{-(\beta-1)}\norm{v}_\theta\norminf{w}, \]
	so Lemma~\ref{lemma:decay_correlations_generalisation} can be seen as a generalisation of the usual upper bound on decay of correlations for observables on the one-sided tower $ \bar\Delta $.
\end{remark}
\begin{remark}
	Our proof of Lemma~\ref{lemma:decay_correlations_generalisation} is based on ideas from \cite[Section~4]{chazottes2012optimal}. However, we have chosen to present the proof in full because (i) our assumptions are weaker, in particular we only require $ \beta>1 $ instead of $ \beta>2 $ and $V$ need not be separately $d_\theta$-Lipschitz and (ii) we avoid introducing Markov chains.
\end{remark}
\begin{proof}[Proof of Lemma~\ref{lemma:decay_correlations_generalisation}]
	Write $ v(x)=V(x,\bar f^n x) $ so
	\begin{align*}
		\int_{\bar\Delta} V(x,f^n x)\, d\bar\mu_\Delta(x)&=\int_{\bar\Delta} v \, d\bar\mu_\Delta=\int_{\bar\Delta} L^n v\, d\bar\mu_\Delta\\
		&=\int_{\bar\Delta} \sum_{\bar f^n  z=x}g_n(z)V(z,\bar{f}^n z) d\bar\mu(x)\\
		&=\int_{\bar\Delta} \sum_{\bar f^n z=x}g_n(z)V(z,x) d\bar\mu_\Delta(x)
		=\int_{\bar\Delta} (L^n u_x)(x)
		\, d\bar\mu_\Delta(x).
	\end{align*}
where $ u_x(z)=V(z,x)$.
Let $\bar \Delta_\ell=\{(y,j)\in \bar \Delta: j=\ell\} $ denote the $ \ell $-th level of $ \bar\Delta $. It follows that we can decompose
\[ \int_{\bar\Delta} V(x,\bar f^n x)d\bar\mu_\Delta(x)
-\int_{\bar\Delta^2} V(x_0,x_1)d\bar\mu^2_\Delta(x_0,x_1)=\sum_{\ell\ge 0}A_\ell \]
where
\[A_\ell=\int_{\bar\Delta_\ell} \bigg((L^n u_x)(x)-\int_{\bar \Delta}V(z,x)d\bar\mu_\Delta(z)\bigg)d\bar\mu_\Delta(x).\]
For all $ \ell\ge 0 $, 
\[ |A_\ell|\le 2\norminf{V}\bar\mu_\Delta(\bar \Delta_\ell)=2\norminf{V}\frac{\bar\mu_Y(\phi>\ell)}{\int \phi d\bar\mu_Y}=O(\norminf{V} (\ell+1)^{-\beta}). \]
Hence,
\[ \sum_{\ell\ge n/2}|A_\ell|=O\big(\! \norminf{V}n^{-(\beta-1)}\big). \]
Let $ x\in \bar\Delta_\ell $, $ \ell\le n $. Then $ (L^{n} u_x)(x)=(L^{n-\ell}u_x)(x_0) $ where $ x_0\in \bar\Delta_0 $ is the unique preimage of $ x $ under $ \bar f^\ell $. Thus by Lemma~\ref{lemma:transfer_op_ptwise_bd},
\[ |A_\ell|\le \int_{\bar\Delta_\ell}D_2 (n-\ell)^{-(\beta-1)}\norm{V(\cdot,x)}_\theta d\bar\mu_{\Delta}\le D_2(n-\ell)^{-(\beta-1)}\sup_{y\in \bar\Delta}\norm{V(\cdot,y)}_\theta\bar\mu_{\Delta}(\bar \Delta_\ell). \]
Hence,
\[ \sum_{\ell\le n/2}|A_\ell|\le D_2(n/2)^{-(\beta-1)}\sup_{y\in \bar\Delta}\norm{V(\cdot,y)}_\theta, \]
completing the proof.
\end{proof}
\begin{proof}[Proof of Lemma~\ref{lemma:fcb_mixing_case}]
	Recall that we wish to bound
	\[ \nabla \widetilde{H}=\int_\Delta \widetilde H(x,f^{n_{p}}x)d\mu_\Delta(x)-\int_{\Delta^2} \widetilde H(x_0,x_1)d\mu^2_{\Delta}(x_0,x_1). \]
	
	Without loss take $ n_p-n_{p-1}\ge 2 $. Let $ R=[(n_p-n_{p-1})/2] $. Write $ \nabla \widetilde{H}=I_1+I_2+\nabla \bar H_R $ where
	\begin{align*}
		I_1&=\int_\Delta \widetilde H(x,f^{n_p} x)d\mu_\Delta(x)-\int_\Delta \widetilde H_R(x,f^{n_p} x)d\mu_\Delta(x),\\
		I_2&=\int_\Delta \widetilde{H}_R(x_0,x_1)d\mu_\Delta^2(x_0,x_1)-\int_{\Delta^2} \widetilde H(x_0,x_1)d\mu^2_{\Delta}(x_0,x_1),\\
		\nabla \bar{H}_R&=\int_{\Delta} \widetilde H_R(x,f^{n_p}x)d\mu_\Delta(x)-\int_{\Delta^2} \widetilde{H}_R(x_0,x_1)d\mu_\Delta^2(x_0,x_1) \\
		&=\int_{\bar\Delta} \bar H_R(x,\bar f^{n_p}x)d\bar\mu_{\Delta}(x)-\int_{\bar\Delta^2} \bar{H}_R(x_0,x_1)d\bar\mu_\Delta^2(x_0,x_1) .
	\end{align*}
	Now by Proposition~\ref{prop:H_R_approx}\ref*{item:H_R-H_diff} and Lemma~\ref{lemma:neg_exp_moments},
	\begin{align}
		|I_1|&=\bigg|\int_\Delta \widetilde H(f^R x,f^{R+n_p}x)d\mu_\Delta(x)-\int_\Delta \widetilde H_R(x,f^{n_p}x)d\mu_\Delta(x)\bigg|
		\nonumber\\
		&\le K_2 \int_\Delta\biggl(\sum_{i=0}^{p-1} \sdsemi{G}{i}\theta^{\psi_R(f^{n_i}x)}+\sum_{i=p}^{q-1} \sdsemi{G}{i}\theta^{\psi_R(f^{n_{p}+k_i}x)}\biggr)d\mu_\Delta(x)\nonumber\\
		&= K_2 \sum_{i=0}^{q-1} \sdsemi{G}{i}\int_\Delta \theta^{\psi_R}d\mu_\Delta\le K_2 D_1\sum_{i=0}^{q-1} \sdsemi{G}{i}R^{-(\beta-1)}.\label{eq:generalised-decay-I_1-bd}
	\end{align}
Similarly,
\begin{equation}
	|I_2|\le K_2 D_1\sum_{i=0}^{q-1} \sdsemi{G}{i} R^{-(\beta-1)}.\label{eq:generalised-decay-I_2-bd}
\end{equation}
Now let $ u_y(z)=\bar{H}_R(z,y) $ and $ V(x,y)=(L^{n_{p-1}+R}u_y)(x) $. Then
\begin{equation}\label{eq:V-H_R-double-int-identity}
	\int_{\bar\Delta^2} V(x_0,x_1)\, d\bar\mu^2_\Delta(x_0,x_1)=\int_{\bar\Delta^2}\bar{H}_R(x_0,x_1)\, d\bar\mu_\Delta^2(x_0,x_1)
\end{equation}
and
\begin{align*}
 V(x,\bar{f}^{n_{p}-n_{p-1}-R}x) &= 
 \sum_{\bar f^{n_{p-1}+R} z=x}g_{n_{p-1}+R}(z)\bar{H}_R(z,\bar{f}^{n_{p}-n_{p-1}-R}x)\\
	&=\sum_{\bar f^{n_{p-1}+R} z=x}g_{n_{p-1}+R}(z)\bar{H}_R(z,\bar{f}^{n_{p}}z) =(L^{n_{p-1}+R}\hat{u})(x)
\end{align*}
where $ \hat u(z)=\bar{H}_R(z,\bar f^{n_{p}}z) $. Hence
\begin{align}
	\int_{\bar\Delta}V(x,\bar{f}^{n_{p}-n_{p-1}-R}x)d\bar \mu_\Delta(x)&=\int_{\bar \Delta} L^{n_{p-1}+R}\hat u \, d\bar\mu_\Delta \nonumber\\
	&=\int_{\bar \Delta} \hat u \, d\bar\mu_\Delta=\int_{\bar\Delta} \bar{H}_R(x,\bar f^{n_{p}}x)\, d\bar\mu_{\Delta}(x). \label{eq:V-H_R-int-identity}
\end{align}
Now by Proposition~\ref{prop:H_R_approx}\ref*{item:H_R-transfer_op}, $ \sup_{y\in \bar\Delta}\norm{V(\cdot,y)}_{\theta}\le K_2(\norminf{G}+\sum_{i=0}^{p-1} \sdsemi{G}{i}). $ By Lemma~\ref{lemma:decay_correlations_generalisation}, \eqref{eq:V-H_R-double-int-identity} and \eqref{eq:V-H_R-int-identity} it follows that
\begin{align}
	|\nabla \bar{H}_R|&=\bigg|\int_{\bar\Delta} V(x,\bar{f}^{n_{p}-n_{p-1}-R}x)d\bar\mu_\Delta(x)-\int_{\bar\Delta^2} V(x_0,x_1)d\bar\mu^2_\Delta(x_0,x_1)\bigg|\nonumber
	\\
	&\le K_2 D_3\bigg(\norminf{G}+\sum_{i=0}^{p-1} \sdsemi{G}{i}\bigg)(n_{p}-n_{p-1}-R)^{-(\beta-1)}.\label{eq:generalised-decay-nabla_H_R-bd}
\end{align}
Recall that $ R=[(n_{p}-n_{p-1})/2] $. Hence $ n_{p}-n_{p-1}-R\ge R$. By combining \eqref{eq:generalised-decay-I_1-bd}, \eqref{eq:generalised-decay-I_2-bd} and \eqref{eq:generalised-decay-nabla_H_R-bd} it follows that
\[|\nabla \widetilde{H}|\le K_2(2D_1+D_3) \sum_{i=0}^{q-1} \sdsemi{G}{i}([(n_{p}-n_{p-1})/2])^{-(\beta-1)}, \]
as required.
\end{proof}
\section{An abstract weak dependence condition}\label{section:abstract_weak_dep}
The \fcb{} can be seen as a weak dependence condition. Let $k\ge 1$ and consider $ k $ disjoint blocks of integers $\{\ell_i,\ell_i+1,\dots,u_i\}$, $0\le i< k$ with $ \ell_i\le u_i<\ell_{i+1}. $ Consider random variables $ X_i $ on $ (\ms,\mu) $ of the form
\begin{equation*}
	X_i(x)=\varPhi_i(\tf^{\ell_i}x,\dots,\tf^{u_i}x)
\end{equation*}
where $ \varPhi_i\in \sepdholsp{u_i-\ell_i+1}(\ms) $, $ 0\le i< k. $

When the gaps $\ell_{i+1}-u_i$ between blocks are large, the random variables $ X_0,\dots,X_{k-1} $ are weakly dependent. Let $\widehat{X}_0,\dots,\widehat{X}_{k-1}$ be independent random variables with $\widehat{X}_i{\eqd X_i}$.
\begin{lemma}\label{lemma:fcb_weak_dep}
	Suppose that $\tf$ satisfies the Functional Correlation Bound with rate $n^{-\gamma}$ for some $ \gamma>0 $. Let $R=\max_i \norminf{\varPhi_i}$. Then for all Lipschitz $ F\map{[-R,R]^k}{\R} $,
	\begin{multline*}
		\big|\Ex[\mu]{F(X_0,\dots,X_{k-1})}-\Ex{F(\shat{X}_0,\dots,\shat{X}_{k-1})}\big|\\
		\le C\sum_{r=0}^{k-2}(\ell_{r+1}-u_r)^{-\gamma}\biggl(\norminf{F}+\Lip(F)\sum_{i=0}^{k-1}\sum_{j=0}^{u_i-\ell_i}\sdsemi{\varPhi_i}{j}\biggr),
	\end{multline*}
	where $C>0$ only depends on $\tf\map{\ms}{\ms}$.
\end{lemma}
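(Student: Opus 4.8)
The plan is a telescoping (hybrid) decoupling: starting from the fully coupled expectation $\Ex[\mu]{F(X_0,\dots,X_{k-1})}$ I peel off the blocks one at a time from right to left, each time invoking the \fcb{} to replace the rightmost remaining block by an independent copy, and I collect the resulting errors.

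First I set up the hybrid sequence. Since $\norminf{\varPhi_i}\le R$, each $X_i$ takes values in $[-R,R]$, so $F$ is only ever evaluated on $[-R,R]^k$. For $0\le m\le k-1$ let $E^{(m)}$ be the expectation of $F(X_0,\dots,X_{k-1})$ when $X_0,\dots,X_{k-1-m}$ are generated from a single point $x$ drawn from $(\ms,\mu)$ while $X_{k-m},\dots,X_{k-1}$ are mutually independent, each drawn from its own independent $\mu$-distributed point and independent of $x$. Then $E^{(0)}=\Ex[\mu]{F(X_0,\dots,X_{k-1})}$ and $E^{(k-1)}=\Ex{F(\shat X_0,\dots,\shat X_{k-1})}$, so it is enough to bound $|E^{(m)}-E^{(m+1)}|$ for $0\le m\le k-2$ and sum over $m$.

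For the transition I first integrate out the independent tail: setting $\widetilde F_m(z_0,\dots,z_{k-1-m})=\Ex{F(z_0,\dots,z_{k-1-m},\shat X_{k-m},\dots,\shat X_{k-1})}$, one checks $\norminf{\widetilde F_m}\le\norminf{F}$ and $\Lip(\widetilde F_m)\le\Lip(F)$ on $[-R,R]^{k-m}$, and $E^{(m)}=\int_\ms G_m(\tf^{m_0}x,\dots,\tf^{m_N}x)\,d\mu(x)$, where $m_0\le\cdots\le m_N$ is the increasing enumeration of all the integers occurring in blocks $0,\dots,k-1-m$ (genuinely increasing because $u_i<\ell_{i+1}$) and $G_m$ is $\widetilde F_m$ with its arguments replaced by the corresponding $\varPhi_i$. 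Applying the \fcb{} with the split index $p$ placed between the times of block $k-2-m$ and those of block $k-1-m$ gives $m_p-m_{p-1}=\ell_{k-1-m}-u_{k-2-m}$, and after integrating the freed point against $\mu$ the double integral on the right of \eqref{eq:fcb_bd} is exactly $E^{(m+1)}$; hence $|E^{(m)}-E^{(m+1)}|\le C(\ell_{k-1-m}-u_{k-2-m})^{-\gamma}\bigl(\norminf{G_m}+\sum_{j}\sdsemi{G_m}{j}\bigr)$.

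It remains to bound $\norminf{G_m}+\sum_j\sdsemi{G_m}{j}$ uniformly in $m$. Clearly $\norminf{G_m}\le\norminf{\widetilde F_m}\le\norminf{F}$. For the seminorms I use a chain-rule estimate: if slot $j$ of $G_m$ corresponds to position $\ell$ inside block $i$, then freezing the other arguments and using the definition of $\dsemi{\cdot}$ together with $\sdsemi{\varPhi_i}{\ell}$ and $\Lip(\widetilde F_m)\le\Lip(F)$ yields $\sdsemi{G_m}{j}\le\Lip(F)\,\sdsemi{\varPhi_i}{\ell}$, so $\sum_j\sdsemi{G_m}{j}\le\Lip(F)\sum_{i=0}^{k-1}\sum_{j=0}^{u_i-\ell_i}\sdsemi{\varPhi_i}{j}$. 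Summing the transition bounds over $0\le m\le k-2$ and reindexing $r=k-2-m$ gives the claimed estimate. Everything here is a direct use of the \fcb{}; the only point needing a little care is this last step — making the composition rule for $[\cdot]_{\mathcal H}$ precise and verifying that integrating out the independent tail inflates neither $\Lip(F)$ nor $\norminf{F}$.
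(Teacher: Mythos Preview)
Your proof is correct and follows essentially the same route as the paper: the paper phrases the argument as an induction on $k$, peeling off the last block at each step (their $I_2$ is your transition $E^{(m)}-E^{(m+1)}$, and their $I_1$ is handled by the inductive hypothesis), whereas you unroll this into an explicit telescoping/hybrid sum. The two key ingredients---integrating out the independent tail without inflating $\norminf{F}$ or $\Lip(F)$, and the chain-rule bound $\sdsemi{G_m}{j}\le \Lip(F)\,\sdsemi{\varPhi_i}{\ell}$---are the same in both.
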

\begin{proof}
	We proceed by induction on $ k$. For $ k=1 $ the inequality is trivial. Assume that this lemma holds for $ k\ge 1 $.
	
	Consider an enriched probability space which contains independent copies of $ \{X_i\} $ and $ \{\shat{X}_i\} $. Write 
	\[
	\Ex[\mu]{F(X_0,\dots,X_{k})}-\Ex{F(\shat{X}_0,\dots,\shat{X}_{k})}=I_1+I_2
	\]
	where
	\begin{align*}
		I_1&=\Ex{F(X_0,\dots,X_{k-1},\shat{X}_{k})}-\Ex{F(\shat{X}_0,\dots,\shat{X}_{k})},\\
		I_2&=\Ex[\mu]{F(X_0,\dots,X_{k})}-\Ex{F(X_0,\dots,X_{k-1},\shat{X}_{k})}.
	\end{align*}
	Since $ \shat{X}_{k}\eqd X_{k} $ and $ \shat{X}_{k}$ is independent of $ X_0,\dots,X_{k-1} $ and $ \shat X_0,\dots,\shat X_{k-1} $,
	\begin{equation*}
		I_1=\int_M \big(\Ex[\mu]{F\big(X_0,\dots,X_{k-1},X_{k}(y)\big)}-\Ex{F\big(\shat X_0,\dots,\shat X_{k-1},X_{k}(y)\big)}\big) d\mu(y).
	\end{equation*}
	Let $ y\in \ms. $ The function $ F_y=F(\cdot,\dots,\cdot,X_{k}(y))\map{M^{k}}{\R} $ satisfies $ \Lip(F_y)\le \Lip(F) $. Hence by the inductive hypothesis,
	\begin{equation*}
		\begin{split}
			|I_1|&\le \int\big|\Ex[\mu]{F_y(X_0,\dots,X_{k-1})}-\Ex{F_y(\shat X_0,\dots,\shat X_{k-1})}\big|d\mu(y)\\
			&\le \int C\sum_{r=0}^{k-2}(\ell_{r+1}-u_r)^{-\gamma}\biggl(\norminf{F_y} +\Lip(F_y)\sum_{i=0}^{k-1}\sum_{j=0}^{u_i-\ell_i}\sdsemi{\varPhi_i}{j}\,\biggr) d\mu(y)\\
			&\le C\sum_{r=0}^{k-2}(\ell_{r+1}-u_r)^{-\gamma}\biggl(\norminf{F}+\Lip(F)\sum_{i=0}^{k-1}\sum_{j=0}^{u_i-\ell_i}\sdsemi{\varPhi_i}{j}\biggr).
		\end{split}
	\end{equation*}
	Now
	\begin{align*}
		I_2&=\Ex[\mu]{F(X_0,\dots,X_{k})}-\int_M \Ex[\mu]{F\big(X_0,\dots,X_{k-1},X_{k}(y)\big)} d\mu(y)\\
		&=\int_M F\big(X_0(x),\dots,X_{k}(x)\big)d\mu(x)-\int_{M^2} F\big(X_0(x),\dots,X_{k-1}(x),X_{k}(y)\big) d\mu^2(x,y).
	\end{align*}
	Write
	\begin{align*}
		F(X_0&(x),\dots,X_k(x))\\
		&=F(\varPhi_0(\tf^{\ell_0}x,\dots,\tf^{u_0}x);\varPhi_1(\tf^{\ell_1}x,\dots,\tf^{u_1}x);\dots;\varPhi_k(\tf^{\ell_k}x,\dots,\tf^{u_k}x))\\
		&=G(\tf^{\ell_0}x,\dots,\tf^{u_0}x;\tf^{\ell_1}x,\dots,\tf^{u_1}x;\dots;\tf^{\ell_k}x,\dots,\tf^{u_k}x).
	\end{align*}
	and 
	\begin{align*}
		F(X_0&(x),\dots,X_{k-1}(x),X_k(y))\\
		&=G(\tf^{\ell_0}x,\dots,\tf^{u_0}x;\tf^{\ell_1}x,\dots,\tf^{u_1}x;\dots;\tf^{\ell_{k-1}}x,\dots,\tf^{u_{k-1}}x;\tf^{\ell_k}y,\dots,\tf^{u_k}y)
	\end{align*}
	where $ G\map{\ms^s}{\R} $, $ s=\sum_{i=0}^k (u_i-\ell_i+1)$. By a straightforward calculation, $ G\in \sepdholsp{s}(\ms) $ and
	\[ \sum_{i=0}^{s-1}\sdsemi{G}{i}\le \sum_{i=0}^{k}\sum_{j=0}^{u_i-\ell_i}\Lip(F)\sdsemi{\varPhi_i}{j}.\]
	Hence by the Functional Correlation Bound,
	\begin{align*}
		|I_2|&=\bigg|\int_M G(\tf^{\ell_0}x,\dots,\tf^{u_0}x;\dots;\tf^{\ell_{k}}x,\dots,\tf^{u_{k}}x)d\mu(x)\\
		&\qquad-\int_{M^2} G(\tf^{\ell_0}x,\dots,\tf^{u_0}x;\dots;\tf^{\ell_{k-1}}x,\dots,\tf^{u_{k-1}}x;\tf^{\ell_{k}}y,\dots,\tf^{u_{k}}y)d\mu^2(x,y)\bigg|\\
		&\le C(\ell_{k}-u_{k-1})^{-\gamma}\biggl(\norminf{F}+\sum_{i=0}^{k} \sum_{j=0}^{u_i-\ell_i}\Lip(F)\sdsemi{\varPhi_i}{j}\biggr).
	\end{align*}
	This completes the proof.
\end{proof}
\section{Moment bounds}\label{section:moment_bounds}
	In this section we prove  Theorem~\ref{thm:moment_bd}. Throughout this section we fix $ \gamma>1$ and assume that $ \tf\map{\ms}{\ms} $ satisfies the Functional Correlation Bound with rate $ n^{-\gamma}. $

In both parts of Theorem~\ref{thm:moment_bd} we use the following moment bounds for independent, mean zero random variables, which are due to von Bahr, Esseen \cite{von1965inequalities} and Rosenthal \cite{rosenthal1970subspaces}, respectively:
\begin{lemma}\label{lemma:rosenthal}
	Fix $p\ge 1$. There exists a constant $C>0$ such that for all $k\ge 1$, for all independent, mean zero random variables $\widehat{X}_0,\dots,\widehat{X}_{k-1}\in L^p$: 
	\begin{enumerate}
		\item\label{item:von_bahr_esseen} If $1\le p\le 2$, then
		\begin{equation*}
			\Ex{\biggl|\sum_{i=0}^{k-1} \widehat{X}_i \biggr|^p}\le C \sum_{i=0}^{k-1} \Ex{|\widehat{X}_i|^p}.
		\end{equation*}
		\item\label{item:rosenthal} If $p>2$, then
	\end{enumerate}
	\begin{equation*}
		\Ex{\biggl|\sum_{i=0}^{k-1} \widehat{X}_i \biggr|^p}\le C\left(\biggl(\sum_{i=0}^{k-1} \Ex{\widehat{X}_i^2}\biggr)^{p/2}+ \sum_{i=0}^{k-1} \Ex{|\widehat{X}_i|^p}\right).
	\end{equation*}

	\vspace{-2em}\qed
\end{lemma}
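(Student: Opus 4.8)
The plan is to recall that these are two classical inequalities---part~\ref{item:von_bahr_esseen} is the von Bahr--Esseen inequality \cite{von1965inequalities} and part~\ref{item:rosenthal} is Rosenthal's inequality \cite{rosenthal1970subspaces}---so at the level of this paper one may simply cite them; below I indicate the standard proofs.

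For part~\ref{item:von_bahr_esseen} I would start from the elementary scalar inequality that for $1\le p\le 2$ and all $x,y\in\R$,
\[ |x+y|^p\le |x|^p+p\,\mathrm{sgn}(x)\,|x|^{p-1}y+2|y|^p. \]
By joint $p$-homogeneity in $(x,y)$ this reduces (dividing by $|x|^p$ when $x\ne0$, and treating $x=0$ trivially) to the single-variable inequality $|1+t|^p\le 1+pt+2|t|^p$, $t\in\R$, which is a routine calculus check. Given independent $X,Y\in L^p$ with $\Ex{Y}=0$, I would fix $X$, apply this inequality in the variable $Y$, and integrate: the linear term vanishes since $\Ex{Y}=0$, so $\Ex{|X+Y|^p}\le\Ex{|X|^p}+2\Ex{|Y|^p}$. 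Taking $X=\widehat X_0+\dots+\widehat X_{k-1}$, which is independent of $\widehat X_k$, and inducting on $k$ gives $\Ex{|\sum_{i=0}^{k-1}\widehat X_i|^p}\le 2\sum_{i=0}^{k-1}\Ex{|\widehat X_i|^p}$, i.e.\ the claim with $C=2$.

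For part~\ref{item:rosenthal} the plan has three stages. First, reduce to symmetric summands: with $\widehat X_i'$ an independent copy of $\widehat X_i$ and $\widetilde X_i=\widehat X_i-\widehat X_i'$, conditioning on $(\widehat X_i)$ and applying Jensen gives $\Ex{|\sum_i\widehat X_i|^p}\le\Ex{|\sum_i\widetilde X_i|^p}$, while $\Ex{\widetilde X_i^2}=2\Ex{\widehat X_i^2}$ and $\Ex{|\widetilde X_i|^p}\le 2^p\Ex{|\widehat X_i|^p}$, so it suffices to treat symmetric $\widehat X_i$. Second, for symmetric $\widehat X_i$ one has $(\widehat X_i)_i\eqd(\varepsilon_i|\widehat X_i|)_i$ for an independent Rademacher sequence $(\varepsilon_i)$; conditioning on $(|\widehat X_i|)$ and applying Khintchine's inequality in $(\varepsilon_i)$ yields $\Ex{|\sum_{i=0}^{k-1}\widehat X_i|^p}\le C_p\,\Ex{(\sum_{i=0}^{k-1}\widehat X_i^2)^{p/2}}$. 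Third, I must bound $\Ex{(\sum_i Y_i)^q}$ for independent $Y_i=\widehat X_i^2\ge0$ and $q=p/2>1$ by a constant times $(\sum_i\Ex{Y_i})^q+\sum_i\Ex{Y_i^q}$ (note $\Ex{Y_i^q}=\Ex{|\widehat X_i|^p}$); writing $\sum_i Y_i=\sum_i\Ex{Y_i}+\sum_i(Y_i-\Ex{Y_i})$ and using $(a+b)^q\le 2^{q-1}(a^q+|b|^q)$ reduces this to estimating $\Ex{|\sum_i(Y_i-\Ex{Y_i})|^q}$, which is a Rosenthal-type quantity at the smaller exponent $q$.

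The main obstacle is this last step, which is self-referential: for $q\le 2$ it is closed off directly by part~\ref{item:von_bahr_esseen} (together with $\Ex{(\Ex{Y_i})^q}\le\Ex{Y_i^q}$), but for $q>2$ one must recurse down the dyadic sequence $p,p/2,p/4,\dots$ until the exponent falls in $(1,2]$, and at each step absorb the cross term $(\sum_i\Ex{Y_i^2})^{q/2}$ produced by the inductive hypothesis back into $(\sum_i\Ex{Y_i})^q+\sum_i\Ex{Y_i^q}$ using Hölder's and Young's inequalities; the bookkeeping that keeps all the Rosenthal constants finite through this recursion is the delicate part. An alternative that avoids the recursion is the Fuk--Nagaev route: truncate each $\widehat X_i$ at a level proportional to $\lambda$, control the small parts by a second-moment (or Bernstein) estimate and the large parts by a union bound, and integrate the resulting tail bound for $\mathbb{P}(|\sum_i\widehat X_i|\ge\lambda)$ against $p\lambda^{p-1}\,d\lambda$.
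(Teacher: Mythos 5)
The paper gives no proof of this lemma: it simply cites von Bahr--Esseen and Rosenthal and closes the statement with a \qed, which is exactly your primary suggestion. Your supplementary sketches (the elementary inequality $|1+t|^p\le 1+pt+2|t|^p$ for part (i), and symmetrization plus Khintchine plus recursion on the exponent for part (ii)) are the standard arguments and look correct, so there is nothing to reconcile with the paper.
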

Let $v,w\in \dholsp(\ms)$ be mean zero. For $ b\ge a\ge 0 $ we denote
\begin{equation*}
	S_v(a,b)=\sum_{a\le i<b}v\circ \tf^i,\quad \bbS_{v,w}(a,b)=\sum_{a\le i<j<b}v\circ \tf^i w\circ \tf^j.
\end{equation*}
Note that $S_v(n)=S_v(0,n)$ and $ \bbS_{v,w}(n)=\bbS_{v,w}(0,n)$. Some straightforward algebra yields the following proposition.
\begin{prop}\label{prop:chen_relation}
	Fix $ \ell\ge 1 $ and $ 0=a_0\le a_1\le \dots \le a_\ell.$ Then,
	\begin{enumerate}
		\item\label{item:S_n_chen} $\displaystyle S_v(a_\ell)=\sum_{i=0}^{\ell-1}S_v(a_i,a_{i+1}). $
		\item\label{item:bbS_n_chen} $ \displaystyle  
		\bbS_{v,w}(a_\ell)=\sum_{i=0}^{\ell-1}\bbS_{v,w}(a_i,a_{i+1})+\sum_{0\le i<j<\ell}S_v(a_i,a_{i+1})S_w(a_j,a_{j+1}).$\qed
	\end{enumerate}
\end{prop}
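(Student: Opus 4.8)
The plan is to prove both identities by partitioning the relevant index sets along the decomposition $\{0,1,\dots,a_\ell-1\}=\bigsqcup_{i=0}^{\ell-1}\{k:a_i\le k<a_{i+1}\}$, where a block is empty precisely when $a_i=a_{i+1}$ (this is harmless, since the associated sums are then empty and vanish). Part~\ref{item:S_n_chen} is then immediate: $S_v(a_\ell)=\sum_{0\le k<a_\ell}v\circ\tf^k=\sum_{i=0}^{\ell-1}\sum_{a_i\le k<a_{i+1}}v\circ\tf^k=\sum_{i=0}^{\ell-1}S_v(a_i,a_{i+1})$.

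For part~\ref{item:bbS_n_chen} I would argue by induction on $\ell$, the case $\ell=1$ being trivial. The key step is the two-block identity $\ell=2$: writing $a=a_1$ and $b=a_2$, I split the pairs $\{(i,j):0\le i<j<b\}$ according to whether $j<a$, or $a\le i$, or $i<a\le j$. The first class contributes $\bbS_{v,w}(0,a)$, the second $\bbS_{v,w}(a,b)$, and the third contributes
\[ \sum_{0\le i<a}\ \sum_{a\le j<b} v\circ\tf^i\, w\circ\tf^j=\Big(\sum_{0\le i<a}v\circ\tf^i\Big)\Big(\sum_{a\le j<b}w\circ\tf^j\Big)=S_v(0,a)\,S_w(a,b), \]
so that $\bbS_{v,w}(b)=\bbS_{v,w}(0,a)+\bbS_{v,w}(a,b)+S_v(0,a)\,S_w(a,b)$.

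To pass from $\ell$ to $\ell+1$, I apply this two-block identity with splitting point $a_\ell$, i.e.\ to the partition $[0,a_\ell)\sqcup[a_\ell,a_{\ell+1})$, obtaining $\bbS_{v,w}(a_{\ell+1})=\bbS_{v,w}(a_\ell)+\bbS_{v,w}(a_\ell,a_{\ell+1})+S_v(a_\ell)\,S_w(a_\ell,a_{\ell+1})$. Substituting the inductive formula for $\bbS_{v,w}(a_\ell)$ and expanding $S_v(a_\ell)=\sum_{i=0}^{\ell-1}S_v(a_i,a_{i+1})$ via part~\ref{item:S_n_chen}, the last term becomes $\sum_{i=0}^{\ell-1}S_v(a_i,a_{i+1})\,S_w(a_\ell,a_{\ell+1})$, which supplies exactly the cross terms with $j=\ell$ that are missing from $\sum_{0\le i<j<\ell}S_v(a_i,a_{i+1})S_w(a_j,a_{j+1})$; regrouping gives the formula for $\ell+1$ and closes the induction.

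There is no genuine obstacle here: this is elementary bookkeeping, exactly as the word ``straightforward'' in the statement suggests. The only points needing a little care are allowing empty blocks when $a_i=a_{i+1}$ and keeping the ranges of the double sums over pair indices consistent when regrouping; a reader who prefers a non-inductive argument can instead carry out the three-way split of $\{(i,j):0\le i<j<a_\ell\}$ directly, according to which blocks $i$ and $j$ fall into.
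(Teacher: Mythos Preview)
Your argument is correct. The paper does not actually give a proof of this proposition; it simply states that ``some straightforward algebra'' yields it and places a \qedsymbol, so your write-up is a valid fleshing out of exactly the kind of index-set partition and induction the authors have in mind.
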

We also need the following elementary proposition:
\begin{prop}\label{prop:p_norm_diff_mvt}
	Fix $R>0$, $p\ge 1$ and an integer $k\ge 1$. Define $F\map{[-R,R]^k}{\R}$ by
	$F(y_0,\dots,y_{k-1})=|y_0+\dots+y_{k-1}|^p.$
	Then $ \norminf{F}\le (kR)^p$ and $\Lip(F)\le p(kR)^{p-1}.$
\end{prop}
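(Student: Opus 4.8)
The plan is to reduce both estimates to elementary one-variable facts about the function $g\colon\R\to\R$, $g(s)=|s|^p$.

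For the sup-norm bound, I would simply note that for any $(y_0,\dots,y_{k-1})\in[-R,R]^k$ the triangle inequality gives $|y_0+\dots+y_{k-1}|\le\sum_{i=0}^{k-1}|y_i|\le kR$, whence $F(y_0,\dots,y_{k-1})=|y_0+\dots+y_{k-1}|^p\le(kR)^p$, i.e.\ $\norminf{F}\le(kR)^p$.

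For the Lipschitz bound, write $\sigma(y)=y_0+\dots+y_{k-1}$, so that $F=g\circ\sigma$. Since $\R^k$ carries the norm $|y|=\sum_{i=0}^{k-1}|y_i|$, the map $\sigma$ is $1$-Lipschitz: $|\sigma(y)-\sigma(y')|=\bigl|\sum_{i=0}^{k-1}(y_i-y_i')\bigr|\le\sum_{i=0}^{k-1}|y_i-y_i'|=|y-y'|$. By the first step $\sigma$ maps $[-R,R]^k$ into $[-kR,kR]$, so it is enough to check that the restriction of $g$ to $[-kR,kR]$ is Lipschitz with constant $p(kR)^{p-1}$; composing, $\Lip(F)\le p(kR)^{p-1}$.

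Finally, for the one-variable bound: when $p>1$, $g$ is continuously differentiable with $g'(s)=p|s|^{p-1}\mathrm{sgn}(s)$, so $|g'(s)|\le p(kR)^{p-1}$ on $[-kR,kR]$ and the mean value theorem yields $|g(s)-g(s')|\le p(kR)^{p-1}|s-s'|$ for $s,s'\in[-kR,kR]$; when $p=1$, $g(s)=|s|$ is $1$-Lipschitz and $1=p(kR)^{p-1}$. There is no real obstacle here, as every step is a direct computation; the only point needing a word of care is the non-differentiability of $|s|^p$ at the origin in the case $p=1$, which is dispatched by the trivial Lipschitz estimate for the absolute value.
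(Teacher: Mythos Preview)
Your proof is correct and follows essentially the same route as the paper: both bound $\norminf{F}$ via the triangle inequality and obtain the Lipschitz estimate by applying the mean value theorem to the one-variable power function, combined with the $1$-Lipschitz property of the sum map with respect to the $\ell^1$ norm. The only cosmetic difference is that the paper applies the MVT to $t\mapsto t^p$ on $[0,kR]$ after taking absolute values (so no separate treatment of $p=1$ is needed), whereas you apply it to $s\mapsto|s|^p$ on $[-kR,kR]$ and handle $p=1$ by hand.
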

\begin{proof}
	Note that $ \norminf{F}\le (kR)^p$. Fix $y=(y_0,\dots,y_{k-1}),y'=(y'_0,\dots,y'_{k-1})\in [-R,R]^k$ and set $a=|y_0+\dots+y_{k-1}|, b=|y'_0+\dots+y'_{k-1}|$. By the Mean Value Theorem,
	\begin{align*}
		|F(y_0,\dots,y_{k-1})-F(y'_0,\dots,y'_{k-1})|&=|a^p-b^p|\\
		&\le p\max\{a^{p-1},b^{p-1}\}|a-b|\\
		&\le p(kR)^{p-1}\sum_{i=0}^{k-1} |y_i-y'_i|=p(kR)^{p-1}|y-y'|,
	\end{align*}
	so $\Lip(F)\le p(kR)^{p-1}$.
\end{proof}
Let $k\ge 1,n\ge 2k $ and define $ a_i=[\tfrac{in}{2k}]$ for $ 0\le i\le 2k.$ Note that 
\begin{equation}\label{eq:a_i_diff_bds}
	\tfrac{n}{2k}-1\le a_{i+1}-a_i\le \tfrac{n}{2k}+1\le \tfrac{n}{k}.
\end{equation}
For $ 0\le i< k $ let $ X_i=S_v(a_{2i},a_{2i+1}). $ Let $\widehat{X}_0,\dots,\widehat{X}_{k-1}$ be independent random variables with $\widehat{X}_i\eqd X_i$.
\begin{lemma}\label{lemma:weak_dep_moment_bd}
	There exists a constant $ C>0 $ such that
	\begin{equation*}
		\Ex[\mu]{\bigg|\sum_{i=0}^{k-1}X_i\bigg|^{2\gamma}}\le Ck^{1+\gamma} n^\gamma\dhnorm{v}^{2\gamma}+\Ex{\bigg|\sum_{i=0}^{k-1}\widehat{X}_i\bigg|^{2\gamma}},
	\end{equation*}
	for all $n\ge 2k,k\ge 1$, for any $v\in \dholsp(\ms)$.
\end{lemma}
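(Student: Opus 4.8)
The plan is to feed the function $F\map{[-R,R]^k}{\R}$, $F(y_0,\dots,y_{k-1})=|y_0+\dots+y_{k-1}|^{2\gamma}$, and the random variables $X_i=S_v(a_{2i},a_{2i+1})$ into the weak dependence estimate of Lemma~\ref{lemma:fcb_weak_dep}, and then translate the conclusion back into a moment bound. First I would record the block structure: writing $X_i=\varPhi_i(\tf^{a_{2i}}x,\dots,\tf^{a_{2i+1}-1}x)$ with $\varPhi_i(z_0,\dots,z_m)=\sum_{j=0}^{m}v(z_j)$, we have $\ell_i=a_{2i}$ and $u_i=a_{2i+1}-1$. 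Using \eqref{eq:a_i_diff_bds} together with $n\ge 2k$ one checks that the blocks are non-empty ($a_{2i+1}-a_{2i}\ge 1$) and that they are well separated, with gap $\ell_{r+1}-u_r=a_{2r+2}-a_{2r+1}+1\ge n/(2k)$ for $0\le r\le k-2$. Since $\varPhi_i$ is a sum of copies of $v$ placed in distinct variables it lies in $\sepdholsp{a_{2i+1}-a_{2i}}(\ms)$, with $\norminf{\varPhi_i}\le (a_{2i+1}-a_{2i})\norminf{v}$ and $\sdsemi{\varPhi_i}{j}=\dsemi{v}$ for each $j$; in particular $R=\max_i\norminf{\varPhi_i}\le (n/k)\dhnorm{v}$, so $kR\le n\dhnorm{v}$, and $\sum_{i=0}^{k-1}\sum_{j=0}^{u_i-\ell_i}\sdsemi{\varPhi_i}{j}=\sum_{i=0}^{k-1}(a_{2i+1}-a_{2i})\dsemi{v}\le n\dhnorm{v}$ because the blocks are disjoint subsets of $\{0,\dots,n-1\}$.

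Next I would bound the two norms of $F$ via Proposition~\ref{prop:p_norm_diff_mvt} with $p=2\gamma$: $\norminf{F}\le (kR)^{2\gamma}\le (n\dhnorm{v})^{2\gamma}$ and $\Lip(F)\le 2\gamma(kR)^{2\gamma-1}\le 2\gamma(n\dhnorm{v})^{2\gamma-1}$, whence $\norminf{F}+\Lip(F)\sum_{i=0}^{k-1}\sum_{j=0}^{u_i-\ell_i}\sdsemi{\varPhi_i}{j}\le (1+2\gamma)n^{2\gamma}\dhnorm{v}^{2\gamma}$. For the prefactor appearing in Lemma~\ref{lemma:fcb_weak_dep} I would use the gap lower bound to get $\sum_{r=0}^{k-2}(\ell_{r+1}-u_r)^{-\gamma}\le (k-1)(2k/n)^{\gamma}\le 2^{\gamma}k^{1+\gamma}n^{-\gamma}$. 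Plugging these into Lemma~\ref{lemma:fcb_weak_dep} (noting that $X_i$ and $\widehat{X}_i$ take values in $[-R,R]$, so $F$ is evaluated within its domain) yields $\bigl|\Ex[\mu]{F(X_0,\dots,X_{k-1})}-\Ex{F(\widehat{X}_0,\dots,\widehat{X}_{k-1})}\bigr|\le C\,2^{\gamma}(1+2\gamma)\,k^{1+\gamma}n^{\gamma}\dhnorm{v}^{2\gamma}$. Since $F(X_0,\dots,X_{k-1})=\bigl|\sum_i X_i\bigr|^{2\gamma}$ and likewise for the independent copies, rearranging gives exactly the asserted inequality.

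The whole argument is essentially bookkeeping, since all the dynamical content has been packaged into Lemma~\ref{lemma:fcb_weak_dep}. The only points needing genuine care are (i) checking that consecutive blocks are separated by a gap of size at least $n/(2k)$ — this is precisely where the hypothesis $n\ge 2k$ is used — and (ii) keeping the powers of $n$ and $k$ straight (in particular the cancellation $kR\le n\dhnorm{v}$ and the fact that there are only $k-1$ gaps) so that the error term comes out as $k^{1+\gamma}n^{\gamma}$ and not larger. I anticipate no substantive analytic obstacle beyond this accounting.
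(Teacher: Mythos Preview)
Your proposal is correct and follows essentially the same approach as the paper: write $X_i=\varPhi_i(\tf^{\ell_i}x,\dots,\tf^{u_i}x)$ with $\varPhi_i$ a sum of copies of $v$, apply Lemma~\ref{lemma:fcb_weak_dep} to $F(y)=|y_0+\dots+y_{k-1}|^{2\gamma}$, and then bound $\norminf{F}$, $\Lip(F)$, $\sdsemi{\varPhi_i}{j}$ and the gap sum exactly as you do. The only cosmetic difference is that the paper keeps $\norminf{v}$ and $\dsemi{v}$ separate in the intermediate bounds before combining them into $\dhnorm{v}$, whereas you pass to $\dhnorm{v}$ immediately; the resulting constants differ harmlessly.
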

\begin{proof}
	Note that
	\begin{align*}
		X_i(x)=\sum_{q=a_{2i}}^{a_{2i+1}-1}v(\tf^q x)=\varPhi_i(\tf^{\ell_i}x,\dots,\tf^{u_i}x),
	\end{align*}
	where $\ell_i=a_{2i}, u_i=a_{2i+1}-1$ and
	\begin{equation*}
		\varPhi_i(x_0,\dots,x_{u_i-\ell_i})=\sum_{j=0}^{u_i-\ell_i}v(x_j).
	\end{equation*}
	Let $ R=\max_i \norminf{\varPhi_i}. $ Then 
	\[  	\Ex[\mu]{\bigg|\sum_{i=0}^{k-1}X_i\bigg|^{2\gamma}}=\Ex[\mu]{F(X_0,\dots,X_{k-1})} \]
	where $F\map{[-R,R]^k}{\R}$ is given by $ F(y_0,\dots,y_{k-1})=|y_0+\dots+y_{k-1}|^{2\gamma}.$ Hence by Lemma~\ref{lemma:fcb_weak_dep},
	\begin{align*}
		\Ex[\mu]{\bigg|\sum_{i=0}^{k-1}X_i\bigg|^{2\gamma}}&\le A+\Ex{\bigg|\sum_{i=0}^{k-1}\shat{X}_i\bigg|^{2\gamma}}\label{eq:moment_bd_I_1_splitting}
	\end{align*}
	where
	\begin{equation}\label{eq:moment_bd_overall_fcb_error}
		|A|\le C\sum_{r=0}^{k-2}(\ell_{r+1}-u_r)^{-\gamma}\biggl(\norminf{F}+\Lip(F)\sum_{i=0}^{k-1}\sum_{j=0}^{u_i-\ell_i}\sdsemi{\varPhi_i}{j}\biggr).
	\end{equation}
	
	It remains to bound $ A $. First we bound the expressions $ \sdsemi{\varPhi_i}{j} $.
	Fix $ 0\le i< k $ and $0\le j\le u_i-\ell_i$. For $x_0,\dots,x_{k-1}, x'_j\in \ms$, 
	\begin{align*}
		|\varPhi_i(x_0,\dots,x_{u_i-\ell_i})-\varPhi_i(x_0,\dots,x_{j-1},x'_j,x_{j+1}\dots,x_{u_i-\ell_i})|&=|v(x_j)-v(x'_j)|
	\end{align*}
	so $ \sdsemi{\varPhi_i}{j}\le \dsemi{v}.$ Note that by~\eqref{eq:a_i_diff_bds},
	$ \norminf{\varPhi_i}\le (a_{2i+1}-a_{2i})\norminf{v}\le \tfrac{n}{k}\norminf{v}. $
	Hence by Proposition~\ref{prop:p_norm_diff_mvt}, 
	\begin{equation}\label{eq:moment_bd_norminf_F}
		\norminf{F}\le 2\gamma(n\norminf{v})^{2\gamma}
	\end{equation}
	and $\Lip(F)\le 2\gamma(n\norminf{v})^{2\gamma-1}.$ 
	
	Thus
	\begin{align}
		\Lip(F)\sum_{i=0}^{k-1}\sum_{j=0}^{u_i-\ell_i}\sdsemi{\varPhi_i}{j}\le & 2\gamma(n\norminf{v})^{2\gamma-1}\sum_{i=0}^{k-1}\sum_{j=0}^{u_i-\ell_i}\sdsemi{\varPhi_i}{j}\nonumber\\
		\le& 2\gamma(n\norminf{v})^{2\gamma-1}\sum_{i=0}^{k-1}(u_i-\ell_i+1)\dsemi{v}\nonumber\\
		\le& 2\gamma(n\norminf{v})^{2\gamma-1}n\dsemi{v}.\label{eq:moment_bd_Lip_F_sum}
	\end{align}
	Now by \eqref{eq:a_i_diff_bds},
	$\ell_{r+1}-u_r=a_{2r+2}-(a_{2r+1}-1)\ge \tfrac{n}{2k}$ for each $0\le r\le k-2$. Hence
	\begin{equation}\label{eq:gap_size_sum_bd}
		\sum_{r=0}^{k-2} (\ell_{r+1}-u_r)^{-\gamma}\le k(\tfrac{n}{2k})^{-\gamma}=2^{\gamma}k^{1+\gamma}n^{-\gamma}.
	\end{equation}
	Substituting \eqref{eq:moment_bd_norminf_F}, \eqref{eq:moment_bd_Lip_F_sum} and \eqref{eq:gap_size_sum_bd} into \eqref{eq:moment_bd_overall_fcb_error} gives
	\begin{align*}
		|A|&\le 2^{\gamma}k^{1+\gamma}n^{-\gamma}(2\gamma(n\norminf{v})^{2\gamma} + 2\gamma(n\norminf{v})^{2\gamma-1}n\dsemi{v})\\
		&\le 2^{1+\gamma}\gamma Ck^{1+\gamma}n^{-\gamma}  \, (n\dhnorm{v})^{2\gamma}= 2^{1+\gamma}\gamma Ck^{1+\gamma}n^\gamma \dhnorm{v}^{2\gamma},
	\end{align*}
	as required.
\end{proof}
We are now ready to prove the moment bound for $S_v(n)$ (Theorem~\ref{thm:moment_bd}\ref{item:moment_bd}).
\begin{proof}[Proof of Theorem~\ref{thm:moment_bd}\ref{item:moment_bd}]
	We prove by induction that there exists $ D> 0 $ such that
	\begin{equation}\label{eq:moment_bd_induct}
		\pnorm{S_v(m)}_{2\gamma}\le Dm^{1/2}\dhnorm{v}
	\end{equation}
	for all $ m\ge 1$, for any mean zero $v\in \dholsp(\ms)$.

	\textbf{\underline{Claim.}} There exists $ C>0 $ such that for all mean zero $ v\in \dholsp(\ms) $, for any $D>0$, for any $ k\ge 1 $ and any $ n\ge 2k $ such that~\eqref{eq:moment_bd_induct} holds for all $m<n$, we have
	\begin{equation*}
		\pnorm{S_v(n)}_{2\gamma}^{2\gamma}\le C(k^{1+\gamma}+k^{1-\gamma}D^{2\gamma})n^\gamma\dhnorm{v}^{2\gamma}.
	\end{equation*}
	
	Now fix $ k\ge 1 $ such that $ Ck^{1-\gamma}\le \frac{1}{2} $. Fix $ D>0 $ such that $ Ck^{1+\gamma}\le \frac{1}{2}D^{2\gamma} $ and~\eqref{eq:moment_bd_induct} holds for all $ m<2k $ and any mean zero $ v\in \dholsp(\ms) $. Then the claim shows that for any $ n\ge 2k $ such that~\eqref{eq:moment_bd_induct} holds for all $ m<n $, we have $ \pnorm{S_v(n)}_{2\gamma}^{2\gamma}\le D^{2\gamma}n^\gamma \dhnorm{v}^{2\gamma}.$ Hence by induction, \eqref{eq:moment_bd_induct} holds for all $ m\ge 1. $
	
	It remains to prove the claim. Note that in the following the constant $ C>0 $ may vary from line to line.
	
	Fix $ n\ge 2k $ and assume that~\eqref{eq:moment_bd_induct} holds for all $ m<n $. 
	By Proposition~\ref{prop:chen_relation}\ref*{item:S_n_chen},
	\[S_v(n)=\sum_{i=0}^{2k-1}S_v(a_i,a_{i+1})=I_1+I_2,\]
	where
	\begin{equation*}
		I_1=\sum_{i=0}^{k-1}S_v(a_{2i},a_{2i+1}),\quad I_2=\sum_{i=0}^{k-1}S_v(a_{2i+1},a_{2i+2}).
	\end{equation*}
	
	We first bound $ \pnorm{I_1}_{2\gamma}$. Write $X_i=S_v(a_{2i},a_{2i+1})$ so that $I_1=\sum_{i=0}^{k-1} X_i$. By Lemma~\ref{lemma:weak_dep_moment_bd},
	\begin{equation}\label{eq:moment_bd_initial_I_1_bd}
		\pnorm{I_1}_{2\gamma}^{2\gamma}=\Ex[\mu]{\bigg|\sum_{i=0}^{k-1}X_i\bigg|^{2\gamma}}\le Ck^{1+\gamma} n^\gamma\dhnorm{v}^{2\gamma}+\Ex{\bigg|\sum_{i=0}^{k-1}\widehat{X}_i\bigg|^{2\gamma}}. 	
	\end{equation}
	We now bound $ \Ex{|\sum_{i=0}^{k-1}\shat{X}_i|^{2\gamma}} $ by using Lemma~\ref{lemma:rosenthal} and the inductive hypothesis.
	
	Fix $0\le i<k$. By stationarity, $X_i=S_v(a_{2i},a_{2i+1})\eqd S_v(a_{2i+1}-a_{2i}).$
	Thus by the inductive hypothesis~\eqref{eq:moment_bd_induct}, $ \Ex[\mu]{|X_i|^{2\gamma}}\le D^{2\gamma}(a_{2i+1}-a_{2i})^{\gamma}\dhnorm{v}^{2\gamma}$.
	Hence by~\eqref{eq:a_i_diff_bds},
	\begin{align*}
		\sum_{i=0}^{k-1} \Ex{|\shat{X}_i|^{2\gamma}}&\le \sum_{i=0}^{k-1}D^{2\gamma}(a_{2i+1}-a_{2i})^{\gamma}\dhnorm{v}^{2\gamma}\nonumber\\
		&\le \sum_{i=0}^{k-1} D^{2\gamma}(n/k)^\gamma\dhnorm{v}^{2\gamma}=D^{2\gamma}k^{1-\gamma}n^\gamma\dhnorm{v}^{2\gamma}.
	\end{align*}
	Now by the \fcb{}, $ |\Ex[\mu]{v\, v\circ T^n}|\le Cn^{-\gamma}\dhnorm{v}^2$. By a standard calculation, it follows that $ \Ex[\mu]{S_v(n)^2}\le Cn\dhnorm{v}^2$. Thus
	\begin{align*}
		\sum_{i=0}^{k-1} \Ex{\widehat{X}_i^2}&= \sum_{i=0}^{k-1}\Ex[\mu]{S_v(a_{2i+1}-a_{2i})^2}\\
		&\le \sum_{i=0}^{k-1} C(a_{2i+1}-a_{2i})\dhnorm{v}^2
		\le C(a_{2k-1}-a_0)\dhnorm{v}^2\\
		&\le Cn\dhnorm{v}^2.
	\end{align*}
	By Lemma~\ref{lemma:rosenthal}\ref*{item:rosenthal}, it follows that
	\begin{align*}\label{eq:moment_bd_overall_indep_moment_bd}
		\Ex{\biggl|\sum_{i=0}^{k-1} \widehat{X}_i \biggr|^{2\gamma}}&\le C\big((C n\dhnorm{v}^2)^\gamma+D^{2\gamma}k^{1-\gamma}n^\gamma\dhnorm{v}^{2\gamma}\big)\\
		&\le C(1+D^{2\gamma}k^{1-\gamma})n^\gamma\dhnorm{v}^{2\gamma}.
	\end{align*}
	Hence by~\eqref{eq:moment_bd_initial_I_1_bd}, overall
	\[\pnorm{I_1}_{2\gamma}^{2\gamma}\le C(k^{1+\gamma}+D^{2\gamma}k^{1-\gamma})n^\gamma\dhnorm{v}^{2\gamma}.\]
	Exactly the same argument applies to $|I_2|_{2\gamma}^{2\gamma}. $ The conclusion of the claim follows by noting that
	\[ \pnorm{S_v(n)}_{2\gamma}^{2\gamma}=\pnorm{I_1+I_2}_{2\gamma}^{2\gamma}\le 2^{2\gamma}(\pnorm{I_1}_{2\gamma}+\pnorm{I_2}_{2\gamma}).\qedhere \]
\end{proof}
We now prove Theorem~\ref{thm:moment_bd}\ref*{item:iterated_moment_bd}. Our proof follows the same lines as that of part~\ref{item:moment_bd}. 

Let $n, k\ge 1. $ Recall that $ a_i=\left[\tfrac{in}{2k}\right] $. For $ 0\le i< k $ define mean zero random variables $ X_i $ on $ (\ms,\mu) $ by
\[X_i=\bbS_{v,w}(a_{2i},a_{2i+1})-\Ex[\mu]{\bbS_{v,w}(a_{2i},a_{2i+1})}. \]
Let $\widehat{X}_0,\dots,\widehat{X}_{k-1}$ be independent random variables with $\widehat{X}_i\eqd X_i$.

The following lemma plays the same role that Lemma~\ref{lemma:weak_dep_moment_bd} played in the proof of Theorem~\ref{thm:moment_bd}\ref*{item:moment_bd}. 
\begin{lemma}\label{lemma:iterated_weak_dep_moment_bd}
	There exists a constant $ C>0 $ such that for any $v,w\in \dholsp(\ms)$,
	\begin{equation*}
		\Ex[\mu]{\bigg|\sum_{i=0}^{k-1}X_i\bigg|^{\gamma}}\le Ck n^\gamma\dhnorm{v}^{\gamma}\dhnorm{w}^\gamma+\Ex{\bigg|\sum_{i=0}^{k-1}\widehat{X}_i\bigg|^{\gamma}}
	\end{equation*}	
	for all $n\ge 2k,k\ge 1$.
\end{lemma}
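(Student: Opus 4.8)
The plan is to run exactly the scheme used in the proof of Lemma~\ref{lemma:weak_dep_moment_bd}: express each block $X_i$ as a separately dynamically \holder{} function of an orbit segment, feed the blocks into the weak dependence estimate Lemma~\ref{lemma:fcb_weak_dep} with the test function $F\map{[-R,R]^k}{\R}$, $F(y_0,\dots,y_{k-1})=|y_0+\dots+y_{k-1}|^\gamma$, and then bound the error term it produces.

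First I would set $\ell_i=a_{2i}$, $u_i=a_{2i+1}-1$, $m_i=u_i-\ell_i$, so that $X_i(x)=\varPhi_i(\tf^{\ell_i}x,\dots,\tf^{u_i}x)$ with
\[ \varPhi_i(x_0,\dots,x_{m_i})=\sum_{0\le p<q\le m_i}v(x_p)w(x_q)-c_i,\qquad c_i=\Ex[\mu]{\bbS_{v,w}(a_{2i},a_{2i+1})}. \]
By \eqref{eq:a_i_diff_bds}, $m_i+1\le n/k$, so the double sum has at most $\tfrac12(n/k)^2$ terms and, from the trivial pointwise bound $|\bbS_{v,w}(a_{2i},a_{2i+1})|\le\tfrac12(n/k)^2\norminf{v}\norminf{w}$, also $|c_i|\le\tfrac12(n/k)^2\norminf{v}\norminf{w}$; hence $\norminf{\varPhi_i}\le(n/k)^2\dhnorm{v}\dhnorm{w}$. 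Replacing the $j$-th argument by $x_j'$ changes $\varPhi_i$ by $\sum_{q>j}(v(x_j)-v(x_j'))w(x_q)+\sum_{p<j}v(x_p)(w(x_j)-w(x_j'))$, which for $x_j=\tf^\ell y$, $x_j'=\tf^\ell y'$ is bounded by $m_i(\norminf{w}\dsemi{v}+\norminf{v}\dsemi{w})(d(y,y')+\theta^{s(y,y')})$; thus $\varPhi_i\in\sepdholsp{m_i+1}(\ms)$ with $\sdsemi{\varPhi_i}{j}\le2(n/k)\dhnorm{v}\dhnorm{w}$, and therefore $\sum_{i=0}^{k-1}\sum_{j=0}^{m_i}\sdsemi{\varPhi_i}{j}\le2(n^2/k)\dhnorm{v}\dhnorm{w}$.

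Next I would apply Lemma~\ref{lemma:fcb_weak_dep} with $R=\max_i\norminf{\varPhi_i}$, so $kR\le(n^2/k)\dhnorm{v}\dhnorm{w}$. By Proposition~\ref{prop:p_norm_diff_mvt}, $\norminf{F}\le(kR)^\gamma$ and $\Lip(F)\le\gamma(kR)^{\gamma-1}$, hence
\[ \norminf{F}+\Lip(F)\sum_{i=0}^{k-1}\sum_{j=0}^{m_i}\sdsemi{\varPhi_i}{j}\le C(n^2/k)^\gamma\dhnorm{v}^\gamma\dhnorm{w}^\gamma. \]
The gaps satisfy $\ell_{r+1}-u_r=a_{2r+2}-a_{2r+1}+1\ge n/(2k)$ by \eqref{eq:a_i_diff_bds}, so $\sum_{r=0}^{k-2}(\ell_{r+1}-u_r)^{-\gamma}\le2^\gamma k^{1+\gamma}n^{-\gamma}$. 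Multiplying the two displays, the error term in Lemma~\ref{lemma:fcb_weak_dep} is $O(kn^\gamma\dhnorm{v}^\gamma\dhnorm{w}^\gamma)$; since that lemma compares $\Ex[\mu]{\bigl|\sum_i X_i\bigr|^\gamma}=\Ex[\mu]{F(X_0,\dots,X_{k-1})}$ with $\Ex{F(\shat{X}_0,\dots,\shat{X}_{k-1})}=\Ex{\bigl|\sum_i\shat{X}_i\bigr|^\gamma}$ within this error, the claim follows.

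I do not expect a genuine obstacle; the content is just exponent bookkeeping. Compared with the scalar case the observable is quadratic in $v,w$, which raises $\norminf{\varPhi_i}$ from order $n/k$ to order $(n/k)^2$, while simultaneously $F$ is only a $\gamma$-th power rather than a $2\gamma$-th power; these two changes compensate, so that $\bigl(\sum_r(\ell_{r+1}-u_r)^{-\gamma}\bigr)\cdot\norminf{F}$ comes out as $O(kn^\gamma)$ rather than $O(k^{1+\gamma}n^\gamma)$. The only point needing a word of care is that the centering constant $c_i$ does not worsen the estimate on $\norminf{\varPhi_i}$, which is immediate from the crude pointwise bound above and requires no appeal to the Functional Correlation Bound at this stage.
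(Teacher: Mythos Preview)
Your proposal is correct and follows essentially the same route as the paper: the same block functions $\varPhi_i$, the same application of Lemma~\ref{lemma:fcb_weak_dep} with $F(y)=|y_0+\dots+y_{k-1}|^\gamma$, and the same bookkeeping via Proposition~\ref{prop:p_norm_diff_mvt} and~\eqref{eq:a_i_diff_bds} to reach the $O(kn^\gamma\dhnorm{v}^\gamma\dhnorm{w}^\gamma)$ error. Your closing paragraph explaining why the exponent accounting yields $k$ rather than $k^{1+\gamma}$ is exactly the point, and the paper's argument differs only in harmless constants.
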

\begin{proof}
	Note that
	\begin{align*}
		X_i(x)&=\sum_{a_{2i}\le q<r\le a_{2i+1}-1}v(\tf^q x)w(\tf^r x)-\Ex[\mu]{\bbS_{v,w}(a_{2i},a_{2i+1})}\\
		&=\varPhi_i(\tf^{\ell_i}x,\dots,\tf^{u_i}x),
	\end{align*}
	where $ \ell_i=a_{2i}, u_i=a_{2i+1}-1 $ and 
	\begin{equation*}
		\varPhi_i(x_0,\dots,x_{u_i-\ell_i})= \sum_{0\le q<r\le u_i-\ell_i}v(x_q) w(x_r)-\Ex[\mu]{\bbS_{v,w}(a_{2i},a_{2i+1})}.
	\end{equation*}
	Let $ R=\max_i \norminf{\varPhi_i} $. Observe that  
	\[ 	\Ex[\mu]{\bigg|\sum_{i=0}^{k-1}X_i\bigg|^\gamma}=\Ex[\mu]{F(X_0,\dots,X_{k-1})}, \]
	where $F\map{[-R,R]^k}{\R}$ is given by $ F(y_0,\dots,y_{k-1})=|y_0+\dots+y_{k-1}|^{\gamma}.$ Hence by Lemma~\ref{lemma:fcb_weak_dep},
	\begin{align*}
		\Ex[\mu]{\bigg|\sum_{i=0}^{k-1}X_i\bigg|^{\gamma}}&\le A+\Ex{\bigg|\sum_{i=0}^{k-1}\widehat{X}_i\bigg|^{2\gamma}}
	\end{align*}
	where
	\begin{equation}\label{eq:iterated_moment_bd_overall_fcb_error}
		|A|\le C\sum_{r=0}^{k-2}(\ell_{r+1}-u_r)^{-\gamma}\biggl(\norminf{F}+\Lip(F)\sum_{i=0}^{k-1}\sum_{j=0}^{u_i-\ell_i}\sdsemi{\varPhi_i}{j}\biggr).
	\end{equation}
	It remains to bound $ A $. The first step is to bound the expressions $ \sdsemi{\varPhi_i}{j}$. Fix $ 0\le i<k, 0\le j\le u_i-\ell_i.$ Let $ x_0,\dots,x_{k-1},x_j'\in \ms.$ Note that 
	\[\varPhi_i(x_0,\dots,x_{u_i-\ell_i})-\varPhi_i(x_0,\dots,x_{j-1},x'_j,x_{j+1}\dots,x_{u_i-\ell_i})=J_1+J_2,  \]
	where
	\begin{align*}
		J_1&=\sum_{j<r\le u_i-\ell_i}(v(x_j)w(x_r)-v(x'_j)w(x_r)),\qquad
		J_2=\sum_{0\le q<j}(v(x_q)w(x_j)-v(x_q)w(x'_j)).
	\end{align*}
	Now,
	\[ |J_1|\le \sum_{j<r\le u_i-\ell_i}|v(x_j)-v(x'_j)||w(x_r)|\le \norminf{w}\sum_{j<r\le u_i-\ell_i}|v(x_j)-v(x'_j)| \]
	and similarly $ |J_2|\le\norminf{v}\sum_{0\le q<j}|w(x_j)-w(x'_j)| $, so
	\begin{equation*}
		\sdsemi{\varPhi_i}{j}\le (u_i-\ell_i)\dhnorm{v}\dhnorm{w}.
	\end{equation*}
	Now recall from~\eqref{eq:a_i_diff_bds} that $ u_i-\ell_i+1=a_{2i+1}-a_{2i}\le n/k $ so
	\begin{equation}\label{eq:iterated_mom_holder_coeff_sum}
		\sum_{i=0}^{k-1}\sum_{j=0}^{u_i-\ell_i}\sdsemi{\varPhi_i}{j}\le \sum_{i=0}^{k-1}(u_i-\ell_i+1)^2 \dhnorm{v}\dhnorm{w}\le \tfrac{n^2}{k}\dhnorm{v}\dhnorm{w}.
	\end{equation}
	Next note that
	\begin{align*}
		\norminf{\varPhi_i}&\le \sum_{0\le q<r\le u_i-\ell_i}\norminf{v}\norminf{w}+\norminf{\bbS_{v,w}(a_{2i},a_{2i+1})}\\
		&\le 2(n/k)^2\norminf{v}\norminf{w}
	\end{align*}
	so by Proposition~\ref{prop:p_norm_diff_mvt}, 
$\norminf{F}\le \bigl(\tfrac{2n^2}{k}\norminf{v}\norminf{w}\bigr)^\gamma$
and
	$ \Lip(F)\le \gamma\big(\tfrac{2n^2}{k} \norminf{v}\norminf{w}\big)^{\gamma-1}. $ Combining these bounds with~\eqref{eq:gap_size_sum_bd},~\eqref{eq:iterated_moment_bd_overall_fcb_error}  and~\eqref{eq:iterated_mom_holder_coeff_sum} yields that
	\begin{align*}
		|A|&\le C2^{\gamma}k^{1+\gamma}n^{-\gamma}\bigl((\tfrac{2n^2}{k}\norminf{v}\norminf{w})^\gamma+\gamma(\tfrac{2n^2}{k} \norminf{v}\norminf{w}\big)^{\gamma-1}\tfrac{n^2}{k}\dhnorm{v}\dhnorm{w}\bigr)\\
		&\le 2^{2\gamma}(1+\gamma/2) Ckn^\gamma\dhnorm{v}^\gamma \dhnorm{w}^\gamma,
	\end{align*}
	as required.
\end{proof}
We are now ready to prove Theorem~\ref{thm:moment_bd}\ref*{item:iterated_moment_bd}.
\begin{proof}[Proof of Theorem~\ref{thm:moment_bd}\ref*{item:iterated_moment_bd}]
	We prove by induction that there exists $ D> 0 $ such that
	\begin{equation}\label{eq:iterated_moment_bd_induct}
		\pnorm{\bbS_{v,w}(m)}_{\gamma}\le Dm\dhnorm{v}\dhnorm{w}
	\end{equation}
	for all $ m\ge 1$, for any $v,w \in \dholsp(\ms)$ mean zero.
	
	\textbf{\underline{Claim.}} There exists $ C>0 $ such that for all $ v,w\in \dholsp(\ms) $ mean zero, for any $ D>0$, any $k\ge 1$ and any $ n\ge 2k $ such that~$ \eqref{eq:iterated_moment_bd_induct} $ holds for all $m<n$, we have
	\begin{equation*}
		\pnorm{\bbS_{v,w}(n)}_{\gamma}^\gamma\le C(k^\gamma+(k^{1-\gamma}+k^{-\gamma/2})D^\gamma)(n \dhnorm{v}\dhnorm{w})^\gamma.
	\end{equation*}
	
	Now fix $ k\ge 1 $ such that $ C(k^{1-\gamma}+k^{-\gamma/2})\le \frac{1}{2} $. Fix $ D>0 $ such that ${Ck^\gamma\le \frac{1}{2}D^{\gamma}}$ and~\eqref{eq:iterated_moment_bd_induct} holds for all $ m<2k $ and any mean zero $ v,w\in \dholsp(\ms)$. Then the claim shows that if $ n\ge 2k $ and~\eqref{eq:iterated_moment_bd_induct} holds for all $ m<n $, then  $ \pnorm{\bbS_{v,w}(n)}_{\gamma}^\gamma\le D^\gamma(n\dhnorm{v}\dhnorm{w})^\gamma $. Hence by induction, \eqref{eq:iterated_moment_bd_induct} holds for all $ m\ge 1. $
	
	It remains to prove the claim. Note that in the following the constant $ C>0 $ may vary from line to line.
	
	Fix $ n\ge 2k $ and assume that~\eqref{eq:iterated_moment_bd_induct} holds for all $m<n$. Recall that $ a_i= \big[\tfrac{in}{2k}\big]$ for $ 0\le i\le 2k. $ By Proposition~\ref{prop:chen_relation}\ref*{item:bbS_n_chen}, \begin{equation*}
		\bbS_{v,w}(n)=\sum_{0\le i<j<2k}S_v(a_i,a_{i+1})S_w(a_j,a_{j+1})+\sum_{i=0}^{2k-1}\bbS_{v,w}(a_i,a_{i+1})=I_1+I_2+I_3+I_4,
	\end{equation*}
	where
	\begin{align*}
		I_1&=\sum_{0\le i<j<2k}S_v(a_i,a_{i+1})S_w(a_j,a_{j+1}),\quad
		I_2=\sum_{i=0}^{2k-1} \Ex[\mu]{\bbS_{v,w}(a_i,a_{i+1})},\\
		I_3&=\sum_{i=0}^{k-1}\big(\bbS_{v,w}(a_{2i},a_{2i+1})-\Ex[\mu]{\bbS_{v,w}(a_{2i},a_{2i+1})}\big),\\ I_4&=\sum_{i=0}^{k-1}\big(\bbS_{v,w}(a_{2i+1},a_{2i+2})-\Ex[\mu]{\bbS_{v,w}(a_{2i+1},a_{2i+2})}\big).
	\end{align*}
	Recall from~\eqref{eq:a_i_diff_bds} that $ a_{i+1}-a_i\le n/k $. Hence by Theorem~\ref{thm:moment_bd}\ref*{item:moment_bd},
	\begin{align*}
		\pnorm{I_1}_\gamma&\le \sum_{0\le i<j<2k}\pnorm{S_v(a_i,a_{i+1})S_w(a_j,a_{j+1})}_{\gamma}\\
		&\le \sum_{0\le i<j<2k}\pnorm{S_v(a_i,a_{i+1})}_{2\gamma}\pnorm{S_w(a_j,a_{j+1})}_{2\gamma}\\
		&\le \sum_{0\le i<j<2k}C^2(a_{i+1}-a_i)^{1/2}\dhnorm{v}(a_{j+1}-a_j)^{1/2}\dhnorm{w}\\
		& \le \sum_{0\le i<j<2k}C^2(n/k)^{1/2}\dhnorm{v}(n/k)^{1/2}\dhnorm{w}\le Ckn\dhnorm{v}\norm{w}.
	\end{align*}
	Now by the \fcb{}, $ |\Ex[\mu]{v\, w\circ T^n}|\le Cn^{-\gamma}\dhnorm{v}\dhnorm{w}$. By a standard calculation, it follows that $ |\Ex[\mu]{\bbS_{v,w}(n)}\!|\le Cn\dhnorm{v}\dhnorm{w}$. Thus
	\begin{align*}
		|I_2|&\le \sum_{i=0}^{2k-1} |\Ex[\mu]{\bbS_{v,w}(a_i,a_{i+1})}|\\
		&\le \sum_{i=0}^{2k-1} C(a_{i+1}-a_{i})\dhnorm{v}\dhnorm{w}= C(a_{2k}-a_0)\dhnorm{v}\dhnorm{w}\\
		&=Cn\dhnorm{v}\dhnorm{w}.
	\end{align*}
	We now bound $ \pnorm{I_3}_\gamma^\gamma. $ Note that $ I_3=\sum_{i=0}^{k-1}X_i, $ where 
	$ X_i=\bbS_{v,w}(a_{2i},a_{2i+1})-\Ex[\mu]{\bbS_{v,w}(a_{2i},a_{2i+1})}.$
	Hence by Lemma~\ref{lemma:iterated_weak_dep_moment_bd},
	\begin{equation}\label{eq:iterated_I_3_bd}
		\pnorm{I_3}_\gamma^\gamma=\Ex[\mu]{\bigg|\sum_{i=0}^{k-1}X_i\bigg|^{\gamma}}\le Ckn^\gamma\dhnorm{v}^\gamma \dhnorm{w}^\gamma+\Ex{\bigg|\sum_{i=0}^{k-1}\widehat{X}_i\bigg|^{\gamma}}.
	\end{equation}
	Fix $ 0\le i<k. $ By stationarity,
	$ X_i\eqd \bbS_{v,w}(a_{2i+1}-a_{2i})-\Ex[\mu]{\bbS_{v,w}(a_{2i+1}-a_{2i})}. $
	Now by the inductive hypothesis~\eqref{eq:iterated_moment_bd_induct}, $ \pnorm{\bbS_{v,w}(a_{2i+1}-a_{2i})}_\gamma\le  D(a_{2i+1}-a_{2i})\dhnorm{v}\dhnorm{w}, $ so
	\begin{align*}
		\pnorm{X_i}_\gamma&\le \pnorm{\bbS_{v,w}(a_{2i+1}-a_{2i})}_\gamma+|\Ex[\mu]{\bbS_{v,w}(a_{2i+1}-a_{2i})}|\\
		&\le 2D(a_{2i+1}-a_{2i})\dhnorm{v} \dhnorm{w}.
	\end{align*}
	It follows that 
	\begin{align*}
		\sum_{i=0}^{k-1}\Ex{|\widehat X_i|^{\gamma}}&\le \sum_{i=0}^{k-1}2^\gamma D^\gamma(a_{2i+1}-a_{2i})^\gamma(\dhnorm{v} \dhnorm{w})^\gamma\\
		&\le \sum_{i=0}^{k-1}2^\gamma D^\gamma(n/k)^\gamma(\dhnorm{v} \dhnorm{w})^\gamma=2^\gamma D^\gamma k^{1-\gamma}(n\dhnorm{v}\dhnorm{w})^\gamma.
	\end{align*}
	If $ 1< \gamma\le 2 $, then by Lemma~\ref{lemma:rosenthal}\ref*{item:von_bahr_esseen},
	\begin{equation*}
		\Ex{\bigg|\sum_{i=0}^{k-1}\widehat X_i\bigg|^{\gamma}}\le 2^\gamma CD^\gamma k^{1-\gamma}(n\dhnorm{v}\dhnorm{w})^\gamma.
	\end{equation*}
	Suppose on the other hand that $ \gamma>2. $ Note that \[ \pnorm{\shat X_i}_2\le \pnorm{\shat X_i}_\gamma\le 2D{(a_{2i+1}-a_{2i})}\dhnorm{v} \dhnorm{w}\] so 
	\begin{align*}
		\sum_{i=0}^{k-1}\Ex{\widehat X_i^2}&\le \sum_{i=0}^{k-1}4D^2 (a_{2i+1}-a_{2i})^2(\dhnorm{v}\dhnorm{w})^2\\
		&\le \sum_{i=0}^{k-1}4D^2(n/k)^2 (\dhnorm{v}\dhnorm{w})^2=4D^2k^{-1} (n\dhnorm{v}\dhnorm{w})^2.
	\end{align*}
	Hence by Lemma~\ref{lemma:rosenthal}\ref*{item:rosenthal},
	\begin{align*}
		\Ex{\bigg|\sum_{i=0}^{k-1}\widehat X_i\bigg|^{\gamma}}&\le C\bigg(\big(4D^2 k^{-1}(n\dhnorm{v}\dhnorm{w})^2\big)^{\gamma/2}+2^\gamma D^\gamma k^{1-\gamma}\big(n\dhnorm{v}\dhnorm{w}\big)^\gamma\bigg)\\
		&=2^\gamma CD^\gamma(k^{-\gamma/2}+k^{1-\gamma})(n\dhnorm{v} \dhnorm{w})^\gamma.
	\end{align*}
	Hence for any $ \gamma>1 $,
	\begin{equation*}
		\Ex{\bigg|\sum_{i=0}^{k-1}\widehat X_i\bigg|^{\gamma}}\le CD^\gamma(k^{-\gamma/2}+k^{1-\gamma})(n\dhnorm{v} \dhnorm{w})^\gamma.
	\end{equation*}
	By~\eqref{eq:iterated_I_3_bd}, it follows that \[ \pnorm{I_3}_\gamma^\gamma\le  C\big(k+D^\gamma(k^{-\gamma/2}+k^{1-\gamma})\big)(n\dhnorm{v} \dhnorm{w})^\gamma. \] Exactly the same argument applies to $ \pnorm{I_4}^\gamma_\gamma.$ The conclusion of the claim follows by noting that
	\begin{align*}
		\pnorm{\bbS_{v,w}(n)}_\gamma^\gamma&=\pnorm{I_1+I_2+I_3+I_4}_\gamma^\gamma\le 4^\gamma(\pnorm{I_1}_\gamma^\gamma+\pnorm{I_2}_\gamma^\gamma+\pnorm{I_3}_\gamma^\gamma+\pnorm{I_4}_\gamma^\gamma)\\
		&\le C\big(k^\gamma+1+2(k+D^\gamma(k^{-\gamma/2}+k^{1-\gamma})\big)(n\dhnorm{v} \dhnorm{w})^\gamma\\
		&\le C\big(k^\gamma+D^\gamma(k^{-\gamma/2}+k^{1-\gamma})\big)(n\dhnorm{v} \dhnorm{w})^\gamma,
	\end{align*}
	as required.
\end{proof}

\textbf{Acknowledgements} The author would like to thank his supervisor Ian Melbourne for suggesting the problem considered in this paper, providing constant feedback and participating in many helpful discussions. He is also grateful to the anonymous referee for their comments, which improved the presentation of this paper.
	\bibliographystyle{alpha}
	\bibliography{bibliography}
\end{document}